\newcommand{\ov}{\overline}
\newcommand{\ol}{\overline}
\newtheorem{theorem}{Theorem}[section]
\newtheorem{lemma}[theorem]{Lemma}
 \theoremstyle{definition}
\newtheorem{definition}[theorem]{Definition}
\theoremstyle{remark}
\newtheorem{remark}[theorem]{Remark}
\numberwithin{equation}{section}
\begin{document}

\title[$C^2$ estimates for $k$-Hessian equations]
{Second order estimates for a class of complex Hessian equations on Hermitian manifolds}

\author{Weisong Dong}
\address{School of Mathematics, Tianjin University,
         Tianjin, P.R.China, 300354}
\email{dr.dong@tju.edu.cn}


\begin{abstract}

In this paper, we derive an \emph{a priori} second order estimate for solutions which are in $\Gamma_{k+1}$ cone
to a class of complex Hessian equations
with both sides of the equation depending on the gradient on compact Hermitian manifolds.

\emph{Mathematical Subject Classification (2010):} 35J15, 53C55, 58J05, 35B45

\emph{Keywords:} Complex Hessian equations; Second order estimates; Hermitian manifolds; Elliptic.

\end{abstract}

\maketitle

\section{Introduction}

Let $(M, \omega)$ be a compact complex manifold of complex dimension $n \geq 2$ with Hermitian metric $\omega$.
For any smooth function $u \in C^{\infty}(M)$, let $\chi(z,u)$ be a smooth real (1,1) form on $M$
and $\psi(z, v, u) \in C^{\infty}\left(\left(T^{1,0}( M ) \right)^{*} \times \mathbb{R}\right)$ be a positive function, where $T^{1,0} (M)$ is the holomorphic tangent bundle.
Given any smooth $(1,0)$-form $a$ on $M$,
we obtain a new real $(1,1)$-form
\begin{equation}
\label{g}
g
=\chi(z, u)+\sqrt{-1} a \wedge \overline{\partial} u - \sqrt{-1} \overline{a} \wedge \partial u+\sqrt{-1} \partial \overline{\partial} u.
\end{equation}
Consider the following complex Hessian equation
\begin{equation}\label{eqn}
Q ( \lambda ) = \sum_{s = 1}^{k} \alpha_s \sigma_s (\lambda ) = \psi(z, Du, u), \;\; \mbox{for}\;\; 1\leq k \leq n,
\end{equation}
where $\lambda = (\lambda_1, \cdots, \lambda_n) \in \Gamma_{k+1}$ are the eigenvalues of $g$ with respect to
$\omega$ and
$\alpha_1, \cdots, \alpha_{k-1}$ are non-negative constants and $\alpha_k$ is a positive constant.
For  $\lambda = \left(\lambda_{1}, \cdots, \lambda_{n}\right) \in \mathbb{R}^{n}$,
$\sigma_s (\lambda)$ is the $s$-th elementary symmetric function defined by
$\sigma_{s}(\lambda)=\sum \lambda_{i_{1}} \lambda_{i_{2}} \cdots \lambda_{i_{s}}$,
where the sum is over $\{1\leq i_{1}<\cdots<i_{s}\leq n\}$.
We will also sometimes use the convention
$\sigma_0 (\lambda) = 1$ and $\sigma_s (\lambda) = 0$ if $s > n$ or $s < 0$.
The cone $\Gamma_{s}$ is defined by
\begin{equation}
\Gamma_{s} :=\left\{\lambda \in \mathbb{R}^{n} : \sigma_{j}(\lambda)>0, j=1, \cdots, s \right\}.
\end{equation}
Apparently, $\Gamma_1 \supset \Gamma_{2} \supset \cdots \supset \Gamma_n$.
It was shown in \cite{C-N-S} that $\Gamma_s$ is an open convex symmetric cone in $\mathbb{R}^n$ with vertex at the origin
and the equation is elliptic if $g \in \Gamma_k (M)$ (see Section 2 for the notation).

If the coefficients $\alpha_1 = \cdots = \alpha_{k-1} = 0$ in \eqref{eqn}, it is just the $k$-Hessian equation, and
the second order estimate
was established by Phong-Picard-Zhang \cite{P-P-Z1} and Dong-Li \cite{DL}.
The $k$-Hessian equations in both real and complex case
are related to many geometric problems and
have been studied extensively.
Equations of a similar form as \eqref{eqn} also emerge frequently.
In the Calibrated geometry,
the special Lagrangian equations introduced by Harvey and
Lawson \cite{HL}
can be written as the alternative combinations of elementary symmetric
functions.
A complex analogue of the special Lagrangian equation
appeared naturally from the study of Mirror Symmetry,
see \cite{LYZ,CJY,CY}.
Another important example
is the Fu-Yau \cite{F-Y2, F-Y1} equation arising from the
study of Hull-Strominger system.
There have been a lot of works on this topic recently,
see \cite{P-P-Z1, P-P-Z2, C-H-Z1, P-P-Z3, C-H-Z2}.
Such type of equations also originate in the study of $J$-equation on toric
varieties by Collins-Sz\'ekelyhidi \cite{CS}.
The real analogous equation of \eqref{eqn} was studied by Li-Ren-Wang \cite{LiRW},
and see Guan-Zhang \cite{GZ} for interesting related work.


A typical example of the complex Hessian equation
involving a linear gradient term on the left hand side
arising from the Gauduchon conjecture was
studied by Sz\'ekelyhidi-Tosatti-Weinkove \cite{S-T-W}.
See also Guan-Nie \cite{GN} for a work on related results.
The Monge-Amp\`ere equation with an
additional linear gradient term inside the determinant can be found in
the study of the Calabi-Yau equation on certain symplectic non-K\"ahler 4-manifolds
by Fino-Li-Salamon-Vezzoni \cite{FinoLi}.
Many progresses have been made recently for general
complex Hessian equations involving a linear gradient term on the left hand sides,
see Tosatti-Weinkove \cite{T-W19}, Feng-Ge-Zheng \cite{Feng-Ge-Zheng}  and Yuan \cite{Yuan,Yuan2}.
In the Fu-Yau equation,
the right hand side function $\psi$ of a particular structure depends on the gradient $Du$.
Phong-Picard-Zhang \cite{P-P-Z1} first investigated the complex $k$-Hessian equation with general right hand side $\psi (z, Du, u)$
on K\"ahler manifolds.
The complex $k$-Hessian equation with gradient terms on both sides
was studied
by Li and the author \cite{DL} on Hermitian manifolds,
and the second order estimate was derived for $g\in \Gamma_n (M)$.
In this paper, we concentrate on the second order estimate for equation \eqref{eqn} with the condition $g \in \Gamma_{k+1}(M)$.
We need to assume that $Q$ satisfies the so called quotient concavity property introduced by Li-Ren-Wang \cite{LiRW}.
\begin{definition}[\cite{LiRW}]
Suppose that $k-1$ polynomials $S_1, \cdots, S_{k-1}$ are defined by
\[
S_l (\lambda) = \sigma_l (\lambda) + \sum_{s=0}^{l-1} \beta_s^l \sigma_s (\lambda), \; \mbox{for}\; 1\leq l \leq k-1,
\]
where $\beta_s^l$ are constants depending on indices $s$ and $l$. If
the function $Q(\lambda) = \sum_{s=1}^k \alpha_s \sigma_s (\lambda)$ satisfies that $(Q/S_l)^{1/(k-l)}$ are concave functions with respect to $\lambda = (\lambda_1, \cdots, \lambda_n)$,
we call that the operator $Q$ is quotient concave.
\end{definition}

We prove the following main result.

\begin{theorem}\label{thm1}
Let $(M,\omega)$ be a compact Hermitian manifold of complex dimension $n$
and $u \in C^{\infty}(M)$. Suppose $g$ defined in \eqref{g} is in $\Gamma_{k+1} (M)$ and satisfies \eqref{eqn}.
Assume $\chi(z, u) \geq \varepsilon \omega$ and $Q$ is quotient concave.
Then we have the uniform second order derivative estimate
\begin{equation}
|D \ov{D} u|_{\omega} \leq C,
\end{equation}
where $C$ is a uniform constant depending only on $(M,\omega)$, $\varepsilon$, $n$, $k$, $\chi$, $\psi$, $a$, $\sup_{M}|u|$, $\sup_{M}|D u|$.
\end{theorem}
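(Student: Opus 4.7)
The plan is to run a maximum principle argument on a Hou--Ma--Wu type test function
\[
W(z) = \log \lambda_1 + \varphi(|Du|^2) + \phi(u),
\]
where $\lambda_1 \geq \cdots \geq \lambda_n$ are the eigenvalues of $g$ with respect to $\omega$ and $\varphi, \phi$ are one-variable functions to be fixed later (typically $\varphi(t) = -\tfrac{1}{2}\log(K - t)$ with $K > 2\sup_M |Du|^2$, and $\phi(u) = -Au$ with $A$ large). Suppose $W$ attains its maximum at some $p_0 \in M$. I choose a local unitary frame near $p_0$ so that, at $p_0$, $\omega$ is standard and $g$ is diagonal with entries $\lambda_1, \ldots, \lambda_n$; when $\lambda_1$ has multiplicity, I replace it with a smooth perturbation in the standard way. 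Writing $F^{i\bar{j}} = \partial Q / \partial g_{i\bar{j}}$ for the linearized operator (positive-definite since $g \in \Gamma_{k+1} \subset \Gamma_k$), the first-order conditions $W_i(p_0) = 0$ and the second-order inequality $F^{i\bar{j}} W_{i\bar{j}}(p_0) \leq 0$ yield a single differential inequality out of which an upper bound for $\lambda_1(p_0)$ must be extracted.

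I then differentiate the equation $Q(\lambda) = \psi(z, Du, u)$ twice in the $\partial_1 \overline{\partial}_1$ direction to compute $F^{i\bar{j}}(\lambda_1)_{i\bar{j}}$. After commuting covariant derivatives (which produces torsion and curvature terms of $\omega$, together with derivatives of $\chi$ and of $a$), the delicate leftover is the concavity symbol $-F^{i\bar{j},p\bar{q}} g_{i\bar{j}, 1} g_{p\bar{q}, \bar{1}}$. The quotient concavity hypothesis is used precisely here: since $\lambda \in \Gamma_{k+1}$ and each $(Q/S_l)^{1/(k-l)}$ is concave, I invoke the Li--Ren--Wang type inequality that bounds this symbol below by a combination of terms of the form $|g_{i\bar{i}, 1}|^2 / \lambda_i$, which are then absorbed by the favorable term $F^{i\bar{j}}(\lambda_1)_i (\lambda_1)_{\bar{j}}/\lambda_1^2$ coming from the logarithm in $W$. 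This is the mechanism that allows one to work under the weaker assumption $g \in \Gamma_{k+1}(M)$, rather than needing the full $g \in \Gamma_n(M)$ where the usual $Q^{1/k}$ concavity would suffice directly.

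The main obstacle, I expect, will be the simultaneous handling of three families of error terms: the torsion of $\omega$, which produces third-order commutator contributions; the linear-gradient piece $\sqrt{-1}\, a \wedge \overline{\partial} u - \sqrt{-1}\, \overline{a} \wedge \partial u$ inside $g$, which generates cross terms involving $u_i$ and $u_{i\bar{j}}$; and the $Du$-dependence of $\psi$, which produces cubic remainder terms such as $\psi_{v^k} u_{k i \bar{i}}$. These are controlled as follows: the cubic $u_{ki\bar{i}}$ terms are tamed by Cauchy--Schwarz against $\varphi''\, F^{i\bar{j}}(|Du|^2)_i (|Du|^2)_{\bar{j}}$, which is strongly positive once $K$ is chosen large; the torsion and $a$-contributions are absorbed via the trace lower bound $\sum_i F^{i\bar{i}} \geq c(n,k,\varepsilon) > 0$, valid because $g \in \Gamma_{k+1}$ together with $\chi \geq \varepsilon\omega$ prevents $\lambda_n$ from being too negative; and the concave-barrier term $-A \phi'(u) F^{i\bar{j}} u_{i\bar{j}}$ with $A$ large finishes the absorption. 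Combining all of the above produces the a priori bound $\lambda_1(p_0) \leq C$, completing the proof.
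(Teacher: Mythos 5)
Your proposal departs from the paper's argument in a way that I believe creates a genuine gap, and a secondary step in your sketch would also fail as stated.

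The most serious issue is your choice of test function. You use
$W = \log \lambda_{1} + \varphi(|Du|^{2}) + \phi(u)$, the Hou--Ma--Wu quantity, whereas the paper (following Phong--Picard--Zhang and \cite{DL}) uses
$G = \log P_{m} + \varphi(|Du|^{2}) + \phi(u)$ with $P_{m} = \sum_{j}\kappa_{j}^{m}$ and $\kappa_{j} = \lambda_{j}(g) + K_{0}$. This is not a cosmetic change. Differentiating $\log P_{m}$ twice produces the additional favorable third-order quantities
$C_{p} = \frac{m-1}{P_{m}} Q^{p\bar p}\sum_{j}\kappa_{j}^{m-2}|D_{p}g_{j\bar j}|^{2}$
and the fuller difference-quotient term
$\tilde D_{p} = \frac{Q^{p\bar p}}{P_m}\sum_{i\neq j}\frac{\kappa_{i}^{m-1}-\kappa_{j}^{m-1}}{\kappa_{i}-\kappa_{j}}|D_{p}g_{i\bar j}|^{2}$,
and these are exactly what the paper uses, together with the quotient-concavity term $A_{i}$ and the off-diagonal concavity term $B_{i}$, to prove the crucial Claim $A_{i} + B_{i} + C_{i} + D_{i} - E_{i}\ge 0$ (Lemmas~\ref{lemm-1} and \ref{lemm-2}), where $E_{i}$ is the bad term coming from $-|DP_{m}|_{Q}^{2}/P_{m}^{2}$. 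With $\log\lambda_{1}$ one loses $C_{i}$ entirely and one only retains the $j=1$, $i\neq 1$ piece of $D_{i}$, and the cancellation in Lemma~\ref{lemm-1} does not go through. Moreover, the shift by $K_{0}$ in $\kappa_{j}$ (which is where $g\in\Gamma_{k+1}$ enters via Lemma~\ref{sigmak-1}) is built into the definition of $P_{m}$; you invoke $\Gamma_{k+1}$ only through the quotient-concavity lemma, but the paper needs it also for the positivity $\kappa_{j}\ge 0$ and for $\sigma_{s-1}(\lambda|pq)>0$, without which the estimates on $Q^{j\bar j, q\bar q}$ in the proof collapse.

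The second issue is your treatment of the cubic remainder $\psi_{v_{k}}u_{k i\bar i}$. You say these are tamed by Cauchy--Schwarz against $\varphi'' F^{i\bar j}(|Du|^{2})_{i}(|Du|^{2})_{\bar j}$, but that mechanism is precisely the Guan--Jiao one and requires convexity of $\psi$ in $Du$; the introduction of the paper explicitly points out that one cannot control these bad third-order terms that way in general. What the paper does instead is to use the first-order critical equation \eqref{critical equ} to substitute for $D_{\ell}|Du|^{2}$: the bad term $\frac{1}{P_{m}}\sum_{j}\kappa_{j}^{m-1}\psi_{v_{\ell}}D_{\ell}g_{j\bar j}$ combines with the corresponding piece of $\varphi' Q^{p\bar q}|Du|^{2}_{p\bar q}$ and cancels exactly (up to a bounded torsion correction), leaving only $-\frac{2\phi'}{m}\operatorname{Re}\sum_{\ell}\psi_{v_{\ell}}u_{\ell}$, which is bounded. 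Without this cancellation your inequality would still contain a term of order $\varphi' \cdot |DDu|$ that cannot be absorbed.

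Finally, the step where you invoke the Li--Ren--Wang inequality to produce ``terms of the form $|g_{i\bar i, 1}|^{2}/\lambda_{i}$'' and absorb them by the $-|D\lambda_{1}|^{2}_{Q}/\lambda_{1}^{2}$ contribution glosses over what is in fact the most delicate part of the argument (Lemma~\ref{lemm-2}). The paper applies Lemma~\ref{GRW} with $S_{\ell}$, splits into cases according to whether $\lambda_{\mu}\ge\delta\lambda_{1}$ and $\lambda_{\mu+1}\le\delta'\lambda_{1}$, estimates the symbol $F^{pq} = S_{\mu}^{p\bar p}S_{\mu}^{q\bar q}-S_{\mu}S_{\mu}^{p\bar p,q\bar q}$ carefully, and then iterates on $\mu$; this structure is what makes the weaker hypothesis $g\in\Gamma_{k+1}$ usable, and it is not an automatic consequence of the concavity of $(Q/S_{\ell})^{1/(k-\ell)}$. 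I would encourage you to rework the argument with the $\log P_{m}$ test function, to replace the Cauchy--Schwarz step by the critical-equation cancellation, and to spell out the case analysis that establishes the Claim.
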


\begin{remark}
For a non-negative constant $\alpha$,  $Q = \alpha \sigma_{k-1} (\lambda) + \sigma_k (\lambda)$
satisfies the quotient concavity, see \cite{LiRW}.
In general, $Q$ may not be quotient concave.
A sufficient condition for $Q$ to be quotient concave is given by Theorem 3 in \cite{LiRW}.
\end{remark}

\begin{remark}
The quotient concavity is crucial to our estimate since it will give us more good third order terms
by Lemma \ref{GRW} and $g\in \Gamma_{k+1} (M)$,
see \eqref{mu1} and \eqref{A-2}.
\end{remark}

\begin{remark}
There are no further assumptions on $\psi$ besides that $\psi > 0$.
If $k=n$ in \eqref{eqn}, our method works for $g\in \Gamma_n (M)$ which is a natural elliptic condition.
For $1< k < n$, the more natural assumption to derive the estimate is that $g \in \Gamma_k(M)$, which is still open.
\end{remark}

If the right hand side function $\psi$ does not depend on $Du$,
a second order estimate for complex $k$-Hessian equations on compact K\"ahler manifolds
was first derived by Hou-Ma-Wu \cite{H-M-W}. The particular form of their estimate was used to establish
gradient estimates by a blowup argument and Liouville type theorem due to Dinew-Kolodziej \cite{D-K},
where the complex $k$-Hessian equations were solved.
On Hermitian manifolds, the problem was settled by Sz\'ekelyhidi \cite{G.S} and Zhang \cite{D.K.Zhang}.
See Collins-Picard \cite{CP} for the Dirichlet problem.
We remark that Zhang \cite{X.W.Zhang}
proved a uniform gradient estimate for solutions in $\Gamma_{k+1}$ cone to the complex $k$-Hessian equations
involving gradient terms on the left hand sides.

The function $\psi$ depending on the gradient $Du$ creates substantial new difficulties due to the difference between the terms
$|D D u|^{2}$ and $|D \ov{D} u|^{2}$.
A consequence of this is that we cannot control the bad third order terms directly as in Li \cite{YYLi} or \cite{H-M-W}.
But, in the real case, if $\psi$ is convex with respect to $Du$, one can establish the
second order estimate as Guan-Jiao \cite{G-J}.
Without the convexity assumption on $\psi$,
the desired estimate
was first derived for solutions in $\Gamma_n$ cone to the real $k$-Hessian equations by Guan-Ren-Wang \cite{G-R-W}
and then for solutions in $\Gamma_{k+1}$ cone by Li-Ren-Wang \cite{L-R-W}.
For the real counterpart of equation \eqref{eqn},
the second order estimate was proved by Li-Ren-Wang \cite{LiRW}.

For the complex $k$-Hessian equations,
Phong-Picard-Zhang \cite{P-P-Z} first generalized the result in \cite{G-R-W} to K\"ahler manifolds.
It is more difficult to control the negative third order terms due to complex conjugacy,
since we get only half as many useful terms $``B"$ and $``D"$ (see Section 3) as in the real case.
These are the main difficulties that have been overcome in \cite{P-P-Z}.
On Hermitian manifolds,
there will be more bad third order terms of the form $T*D^3u$, where $T$ is the torsion of $\omega$.
Furthermore, the linear gradient terms in $g$ also bring some bad terms, see $``H"$ and $``I"$ in Section 3.
The method in \cite{P-P-Z} cannot be used to overcome these new difficulties.
Therefore, we apply the maximum principle to the test function used by the author in \cite{DL}.
This gives us a little more good third order terms which are sufficient to push the argument through.

The rest of the paper is organized as follows.
In Section 2, we introduce some useful notations, properties of the $k$-th elementary symmetric functions,
and some preliminary calculations and estimates.
In Section 3, we prove Theorem \ref{thm1}.

\textbf{Acknowledgements}:
 The author is supported by the National Natural Science Foundation of China, No.11801405 and No. 62073236.

\section{Preliminaries}

First, we introduce some notations.
Let $A^{1,1}(M)$ be the space of smooth real $(1,1)$-forms on $(M, \omega)$. For any $h \in A^{1,1}(M)$,
written in local coordinates as $h = \sqrt{-1} h _{i\bar j} d z^i \wedge d z^{\bar j}$,
we say
\[
h \in \Gamma_k (M)
\]
if the vector of eigenvalues of the Hermitian endomorphism ${h^i}_j = \omega^{i\bar k} h_{j \bar k}$ lies in the
$\Gamma_k$ cone at each point.
With the above notation, in local coordinates, \eqref{eqn} can be rewritten as follows:
\begin{equation}\label{Sk2}
Q ( {g^i}_j )= Q \left(\omega^{i\bar k} (\chi_{j \overline{k}} + u_{j \overline{k}}
+ a_{j} u_{\overline{k}} + a_{\overline{k}} u_{j}) \right) = \psi(z, D u, u).
\end{equation}



In local complex coordinates $\left(z_{1}, \ldots, z_{n}\right)$, the subscripts of a function $u$ always denote the covariant derivatives of $u$ with respect to the Chern connection of $\omega$ in the directions of the local frame $\left(\partial / \partial z^{1}, \ldots, \partial / \partial z^{n}\right)$. Namely,
\begin{equation}\nonumber
u_{i}=D_{i}u=D_{\partial / \partial z^{i}} u, \; u_{i \overline{j}}=D_{\partial / \partial \overline{z}^{j}} D_{\partial / \partial z^{i}} u, \; u_{i \overline{j} l}=D_{\partial / \partial z^{l}} D_{\partial / \partial \overline{z}^{j}} D_{\partial / \partial z^{i}} u.
\end{equation}
We have the following commutation formula on Hermitian manifolds (see \cite{T-W3} for more details):
\begin{equation}\label{order}
\begin{aligned}
u_{i \overline{j} \ell}=  u_{i \ell \overline{j}}-u_{p} R_{\ell \overline{j} i}{}^p, \;
u_{p \overline{j} \overline{m}}=  u_{p \overline{m} \overline{j}}-\overline{T_{m j}^{q}} u_{p \overline{q}},\;
u_{i \overline{q} \ell}=  u_{\ell \ov q i}-T_{\ell i}^{p} u_{p \overline{q}},
\end{aligned}
\end{equation}
\begin{equation} \label{order2}
u_{i \overline{j} \ell \overline{m}} =  u_{\ell \overline{m} i \overline{j}}
+u_{p \overline{j}} R_{\ell \overline{m} i}{}^{p}
 - u_{p \overline{m}} R_{i \overline{j} \ell}{}^{p}
- T_{\ell i}^{p} u_{p \overline{m} \overline{j}}
-\overline{T_{m j}^{q}} u_{\ell \overline{q} i}
 -T_{i \ell}^{p} \overline{T_{m j}^{q}} u_{p \overline{q}}.
\end{equation}
As in \cite{CP} and \cite{P-P-Z3}, we define the tensor
\[
\sigma_k^{p\bar q} = \frac{\partial \sigma_k}{\partial {g^r}_p} \omega^{r \bar q}\; \mbox{and}\;
\sigma_k^{p\bar q, r\bar s} = \frac{\partial^2 \sigma_k}{\partial {g^a}_p \partial {g^b}_r} \omega^{a\bar q} \omega^{b\bar s}.
\]
Then, we introduce the following notations
\[
Q^{p\bar q} = \sum_{s=1}^{k} \alpha_s \sigma_s^{p \bar q},  \;\;
Q^{p \overline{q}, r \overline{s}} = \sum_{l=1}^{k} \alpha_l \sigma_l^{p\bar q, r\bar s},\;\;
\mathcal{Q}=\sum_{p} Q^{p \overline{q}} \omega_{\bar q p},
\]
and the following notations as in \cite{P-P-Z}
\begin{equation}
\begin{aligned}
|D D u|_{Q }^{2}= Q^{p \overline{q}} \omega^{m \overline{\ell}} u_{mp} u_{\ov \ell \ov q},\;
|D \ov{D} u|_{Q}^{2}= Q^{p \overline{q}} \omega^{m \overline{\ell}} u_{p\ov\ell} u_{m\ov q},\;
|\eta|_{Q}^{2}= Q^{p \overline{q}} \eta_{p} \eta_{\overline{q}},
\end{aligned}
\end{equation}
for any 1-form $\eta$.

We use $\sigma_k (\lambda|i)$ to denote the $k$-th elementary symmetric function with $\lambda_i = 0$
and $\sigma_k(\lambda|ij)$ the $k$-th elementary function with $\lambda_i =\lambda_j = 0$.
We list here some properties of the $k$-th elementary symmetric function.
\begin{lemma}
\label{sigmak}
For $\lambda = (\lambda_1, \cdots, \lambda_n) \in \mathbb{R}^n$ and $k = 1, \cdots, n$, we have
\begin{itemize}
\item[(1)] $ \sigma_k (\lambda) = \sigma_k(\lambda | i) + \lambda_i \sigma_{k-1} (\lambda | i),\; \forall \; 1 \leq i \leq n;$
\item[(2)] $ \sum_{i=1}^n \sigma_k (\lambda | i) = (n-k) \sigma_k(\lambda);$
\item[(3)] For $\lambda\in \Gamma_k $ and $\lambda_1 \geq \cdots \geq \lambda_n$, we have $\lambda_1 \sigma_{k-1} (\lambda | 1) \geq \frac{k}{n} \sigma_k(\lambda);$
\item[(4)] If $\lambda \in \Gamma_{k}$, we have $\lambda | j \in \Gamma_{k-1}$ for all $1\leq j \leq n$;
\item[(5)] If $\lambda_i \geq \lambda_j$, we have $\sigma_{k-1} (\lambda |i ) \leq \sigma_{k-1} (\lambda | j)$;
\item[(6)] For $\lambda\in \Gamma_k $ and $\lambda_1 \geq \cdots \geq \lambda_n$,
           we have $\sigma_k (\lambda) \leq C \lambda_1 \cdots \lambda_k $.
\end{itemize}
\end{lemma}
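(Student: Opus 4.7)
The lemma collects six standard properties of elementary symmetric functions and the Garding cones $\Gamma_k$. The plan is to dispose of items (1), (2), (4), (5), and (6) by short direct arguments and then concentrate the work on (3), which is the only non-routine assertion.

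Item (1) is immediate from the definition of $\sigma_k$: split the sum over $k$-element subsets of $\{1,\dots,n\}$ into those containing the index $i$ and those that do not. Item (2) follows by summing the identity in (1) over $i$ and observing that $\sum_i \lambda_i \sigma_{k-1}(\lambda|i) = k\,\sigma_k(\lambda)$, since every $k$-subset is counted exactly $k$ times. Item (4) is a standard property of Garding cones, provable by induction on $k$ using that $\sigma_{k-1}(\lambda|i) = \partial \sigma_k/\partial \lambda_i$ together with the hyperbolicity of $\sigma_k$; since $\Gamma_k \subset \Gamma_{j+1}$ for $j+1 \leq k$, the positivity $\sigma_j(\lambda|i) > 0$ for every $j = 1, \dots, k-1$ follows in one stroke.

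Item (5) is a consequence of the identity
\[
\sigma_{k-1}(\lambda|j) - \sigma_{k-1}(\lambda|i) = (\lambda_i - \lambda_j)\, \sigma_{k-2}(\lambda|ij),
\]
which is obtained by expanding both $\sigma_{k-1}(\lambda|i)$ and $\sigma_{k-1}(\lambda|j)$ via (1) in the complementary index; the factor $\sigma_{k-2}(\lambda|ij)$ is nonnegative by iterating (4). Item (6) is obtained by iterating (3): since $\lambda|1 \in \Gamma_{k-1}$ by (4), applying (3) again to $\lambda|1$ gives $\sigma_{k-1}(\lambda|1) \leq \frac{n-1}{k-1}\lambda_2 \sigma_{k-2}(\lambda|12)$, and after $k$ steps the constant telescopes to $\binom{n}{k}$, yielding $\sigma_k(\lambda) \leq \binom{n}{k}\lambda_1 \lambda_2 \cdots \lambda_k$.

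The main obstacle is (3). The tempting short argument—claiming that $\sigma_k(\lambda|1)$ is the smallest of the $\sigma_k(\lambda|i)$ and concluding via (2)—fails, because when $\lambda$ has negative components $\sigma_k(\lambda|1)$ need not be the minimum. My plan is to prove (3) by induction on $n$. Using (1), the inequality in (3) is equivalent to
\[
(n-k)\, \lambda_1 \sigma_{k-1}(\lambda|1) \geq k\, \sigma_k(\lambda|1).
\]
The cases $k=1$ and $k=n$ are immediate. For $1 < k < n$, I split on the sign of $\sigma_k(\lambda|1)$. If $\sigma_k(\lambda|1) \leq 0$, the left-hand side is already nonnegative by (4) and we are done. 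If $\sigma_k(\lambda|1) > 0$, then combined with (4) this forces $\lambda|1 \in \Gamma_k(\mathbb{R}^{n-1})$, and I apply the inductive hypothesis twice to $\lambda|1$: once with parameters $(n-1, k)$ to get $\lambda_2 \sigma_{k-1}(\lambda|12) \geq \tfrac{k}{n-1}\sigma_k(\lambda|1)$, and once with parameters $(n-1, k-1)$ to get $\lambda_2 \sigma_{k-2}(\lambda|12) \geq \tfrac{k-1}{n-1}\sigma_{k-1}(\lambda|1)$. Expanding $\sigma_{k-1}(\lambda|1) = \sigma_{k-1}(\lambda|12) + \lambda_2 \sigma_{k-2}(\lambda|12)$ via (1), using $\lambda_1 \geq \lambda_2 > 0$ together with the nonnegativity statements of (4), and combining the two inductive estimates algebraically, we arrive at $(n-k)\lambda_1 \sigma_{k-1}(\lambda|1) \geq k \sigma_k(\lambda|1)$, closing the induction.
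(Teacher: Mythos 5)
Your proposal is correct, and it is more self-contained than the paper's treatment: the paper does not actually prove this lemma, it declares (1)--(2) trivial and cites Hou--Ma--Wu for (3), Huisken--Sinestrari for (4), and Li for (5)--(6). Your arguments for (1), (2), (5), (6) are the standard ones (and in (6) you even extract the explicit constant $\binom{n}{k}$), and your induction for (3) does close. To record the one step you left as ``combining the two inductive estimates algebraically'': in the case $\sigma_k(\lambda|1)>0$, Estimate II, $\lambda_2\sigma_{k-2}(\lambda|12)\geq \frac{k-1}{n-1}\sigma_{k-1}(\lambda|1)$, rearranges (after expanding $\sigma_{k-1}(\lambda|1)=\sigma_{k-1}(\lambda|12)+\lambda_2\sigma_{k-2}(\lambda|12)$) to $(n-k)\lambda_2\sigma_{k-2}(\lambda|12)\geq (k-1)\sigma_{k-1}(\lambda|12)$, whence
\begin{align*}
(n-k)\lambda_1\sigma_{k-1}(\lambda|1) &= (n-k)\lambda_1\sigma_{k-1}(\lambda|12)+(n-k)\lambda_1\lambda_2\sigma_{k-2}(\lambda|12)\\
&\geq (n-1)\lambda_1\sigma_{k-1}(\lambda|12)\ \geq\ (n-1)\lambda_2\sigma_{k-1}(\lambda|12)\ \geq\ k\,\sigma_k(\lambda|1),
\end{align*}
using $\lambda_1\geq\lambda_2>0$, the positivity $\sigma_{k-1}(\lambda|12)>0$ (from $\lambda|1\in\Gamma_k(\mathbb{R}^{n-1})$ and item (4)), and Estimate I in the last step; this is exactly the inequality $(n-k)\lambda_1\sigma_{k-1}(\lambda|1)\geq k\sigma_k(\lambda|1)$ equivalent to (3). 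What your route buys is independence from the cited references and a transparent inductive structure; what the paper's route buys is brevity. Two minor remarks: in (5), as your own argument shows, one needs $\sigma_{k-2}(\lambda|ij)\geq 0$ (e.g.\ $\lambda$ in a suitable $\Gamma$-cone); the statement is false for arbitrary $\lambda\in\mathbb{R}^n$ (take $\lambda=(1,0,-5)$, $k=3$), but this is harmless since the lemma is only invoked for $\lambda\in\Gamma_{k+1}$. In (4), the one genuinely nontrivial input is that $\sigma_{k-1}(\lambda|i)=\partial\sigma_k/\partial\lambda_i>0$ on $\Gamma_k$; your appeal to hyperbolicity and induction is the standard justification, but in a written version it should be isolated as the key fact rather than compressed into a single clause.
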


\begin{proof}
For (1) (2), it is trivial. For (3) see \cite{H-M-W}. For (4) see \cite{HS}. For (5) and (6), see \cite{YYLi}.
\end{proof}

\begin{lemma}
\label{sigmak-1}
For $\lambda \in \Gamma_{k+1}$ with ordering $\lambda_1 \geq \cdots \geq \lambda_n$,
we have $\sigma_k \geq \lambda_1 \cdots \lambda_k$.
If $\lambda$  satisfies $Q (\lambda )= \psi$ in addition,
we have $\lambda_i + K_0 \geq 0$ for a uniform positive constant $K_0$ depending on $\psi$ and $\alpha_k$.
\end{lemma}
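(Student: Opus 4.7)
The plan is to prove the two assertions in sequence: the lower bound $\sigma_k(\lambda) \geq \lambda_1\cdots\lambda_k$ by induction on $k$, and then the positivity bound on $\lambda_i$ as an application of the first together with the structural bound on $\sigma_k(\lambda)$ coming from the equation $Q(\lambda) = \psi$.

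For the first inequality I would induct on $k$ using the identity $\sigma_k(\lambda) = \lambda_1\sigma_{k-1}(\lambda|1) + \sigma_k(\lambda|1)$. The base case $k=1$ demands $S := \lambda_2 + \cdots + \lambda_n \geq 0$; were $S < 0$, then $\sigma_1(\lambda) > 0$ forces $\lambda_1 > -S = |S|$, and the elementary identity $\sigma_2(\lambda|1) = \tfrac{1}{2}(S^2 - \sum_{i \geq 2}\lambda_i^2) \leq \tfrac{1}{2}S^2$ gives $\sigma_2(\lambda) = \lambda_1 S + \sigma_2(\lambda|1) < -S^2 + \tfrac{1}{2}S^2 < 0$, contradicting $\lambda \in \Gamma_2$. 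For the inductive step, Lemma \ref{sigmak} (4) gives $\lambda|1 \in \Gamma_k = \Gamma_{(k-1)+1}$, so the inductive hypothesis applied to the ordered tuple $\lambda|1$ yields $\sigma_{k-1}(\lambda|1) \geq \lambda_2\cdots\lambda_k$; combined with $\lambda_1 > 0$ (from $\sigma_1 > 0$ and decreasing order) and $\sigma_k(\lambda|1) > 0$, the recursion closes.

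For the second assertion, $\alpha_s \geq 0$ and $\sigma_s(\lambda) > 0$ for $s \leq k$ give $\alpha_k \sigma_k(\lambda) \leq Q(\lambda) = \psi$, hence $\sigma_k(\lambda) \leq C$ with $C = C(\psi, \alpha_k)$. Iteratively applying Lemma \ref{sigmak} (3) to $\lambda|1\cdots i \in \Gamma_{k+1-i}$ (whose maximal coordinate is $\lambda_{i+1}$), together with the positivity $\sigma_{k-i}(\lambda|1\cdots(i+1)) > 0$ coming from Lemma \ref{sigmak} (4), yields the pointwise strict positivity $\lambda_1 \geq \cdots \geq \lambda_k > 0$. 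Combined with the first part, $\lambda_k^k \leq \lambda_1\cdots\lambda_k \leq \sigma_k(\lambda) \leq C$, bounding $\lambda_k \leq C^{1/k}$ from above. To bound $-\lambda_n$ (assuming $\lambda_n < 0$; otherwise nothing to do), the recursion $\sigma_k(\lambda) = \sigma_k(\lambda|n) + \lambda_n \sigma_{k-1}(\lambda|n)$ together with positivity of all three $\sigma$-quantities (the latter two from $\lambda|n \in \Gamma_k$) gives $-\lambda_n \leq \sigma_k(\lambda|n)/\sigma_{k-1}(\lambda|n)$; bounding the numerator via Lemma \ref{sigmak} (6) as $\sigma_k(\lambda|n) \leq C'\lambda_1\cdots\lambda_k$ and the denominator via the first part applied to $\lambda|n \in \Gamma_k$ as $\sigma_{k-1}(\lambda|n) \geq \lambda_1\cdots\lambda_{k-1} > 0$ produces $-\lambda_n \leq C'\lambda_k \leq K_0$.

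The main obstacle is establishing the strict pointwise positivity $\lambda_1, \ldots, \lambda_k > 0$; this is what legitimizes dividing by $\lambda_1\cdots\lambda_{k-1}$ in the final ratio estimate and what converts the ratio $\sigma_k(\lambda|n)/\sigma_{k-1}(\lambda|n)$ into a bound depending only on $\lambda_k$ rather than on $\sigma_1$. The remaining steps are direct manipulations of the recursions for $\sigma_k$ together with the properties listed in Lemma \ref{sigmak}.
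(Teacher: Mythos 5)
The paper itself gives no argument here; it defers to the reference \cite{L-R-W}. Your proof is self-contained and correct, and it follows the standard route used in that reference and in related work (Guan--Ren--Wang): inducting on $k$ via $\sigma_k = \lambda_1\sigma_{k-1}(\lambda|1) + \sigma_k(\lambda|1)$ for the lower bound, then using $\alpha_k\sigma_k \le Q = \psi$ together with the positivity $\lambda_1 \ge \cdots \ge \lambda_k > 0$ (which follows most directly from $\lambda|1\cdots(k-1)\in\Gamma_2\subset\Gamma_1$ by repeated use of Lemma~\ref{sigmak}~(4), rather than the slightly roundabout iteration of~(3) you sketch) to get $\lambda_k \le C^{1/k}$, and finally the recursion at index $n$ together with Lemma~\ref{sigmak}~(6) and the first assertion applied to $\lambda|n\in\Gamma_k$ to bound $-\lambda_n \le C'\lambda_k$. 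All steps check out, including the base-case contradiction via $\sigma_2(\lambda|1) = \tfrac12\bigl(S^2 - \sum_{i\ge 2}\lambda_i^2\bigr)$; the resulting $K_0$ depends on $\sup\psi$, $\alpha_k$, $n$, $k$, consistent with the statement.
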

For the proof of the above lemma, see \cite{L-R-W}.
We also need the following lemma.
\begin{lemma}[\cite{G-R-W}]
\label{GRW}
Suppose $1 \leq \ell<k \leq n$, and let $\beta =1 /(k-\ell)$. Let $W=(w_{p\overline{q}})$ be a Hermitian tensor in the $\Gamma_{k}$ cone.
$Q$ given in \eqref{eqn} satisfies the quotient concavity.
Then for any $\theta>0$,
\begin{equation}
\begin{aligned}
&\ - Q^{p \ov{p}, q \ov{q}}(W) w_{p \overline{p} i} w_{q \overline{q} \overline{i}}+\left(1-\beta +\frac{\beta}{\theta}\right) \frac{\left|D_{i} Q(W)\right|^{2}}{Q(W)}\\
\geq &\ Q(W)(\beta + 1- \beta \theta)\left|\frac{D_{i} S_{\ell} (W)}{S_{\ell} (W)}\right|^{2}-\frac{Q}{S_\ell}(W) S_{\ell}^{p \overline{p}, q \overline{q}}(W) w_{p \overline{p} i} w_{q \overline{q} \overline{i}}.
\end{aligned}
\end{equation}
\end{lemma}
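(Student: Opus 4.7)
The plan is to obtain the inequality directly from the quotient concavity of $F := (Q/S_\ell)^{1/(k-\ell)}$ combined with a single AM--GM split parametrized by $\theta$. I work in an eigenbasis of $W=(w_{p\ov q})$ so that $Q^{p\ov q}=Q^p\delta_{pq}$ and the diagonal second partial $Q^{p\ov p,q\ov q}$ coincides with the ordinary eigenvalue second partial $Q^{pq}:=\partial^2 Q/\partial\lambda_p\partial\lambda_q$, and similarly for $S_\ell$. Setting $R=Q/S_\ell$ and $\beta=1/(k-\ell)$, the identity $\log F = \beta\log Q - \beta\log S_\ell$ produces after two differentiations
\begin{equation*}
\frac{F^{pq}}{F} = \beta\frac{Q^{pq}}{Q} - \beta\frac{S_\ell^{pq}}{S_\ell} + \beta(\beta-1)\frac{Q^p Q^q}{Q^2} + \beta(\beta+1)\frac{S_\ell^p S_\ell^q}{S_\ell^2} - \beta^2\frac{Q^p S_\ell^q + Q^q S_\ell^p}{Q S_\ell}.
\end{equation*}
The quotient-concavity hypothesis asserts that $(F^{pq})$ is negative semi-definite; because it is symmetric in $p,q$, the complexified form $F^{pq}\xi_p\overline{\xi_q}\leq 0$ holds for every $\xi\in\bfC^n$, the skew real/imaginary cross term cancelling.

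Testing this inequality with $\xi_p = w_{p\ov p i}$---for which $\overline{\xi_p}=w_{p\ov p\ov i}$ since $w_{p\ov p}$ is real---and invoking $\sum_p Q^p w_{p\ov p i} = D_i Q$ together with its $S_\ell$-analogue, I multiply by $Q/(\beta F)>0$ and rearrange to obtain
\begin{equation*}
-Q^{p\ov p,q\ov q} w_{p\ov p i} w_{q\ov q \ov i} + (1-\beta)\frac{|D_i Q|^2}{Q} \geq -\frac{Q}{S_\ell} S_\ell^{p\ov p,q\ov q} w_{p\ov p i} w_{q\ov q \ov i} + (\beta+1)\,Q\frac{|D_i S_\ell|^2}{S_\ell^2} - \frac{2\beta}{S_\ell}\mathrm{Re}\bigl(D_i Q\cdot\overline{D_i S_\ell}\bigr).
\end{equation*}
The remaining cross term is absorbed by the weighted AM--GM bound
\begin{equation*}
-\frac{2\beta}{S_\ell}\mathrm{Re}\bigl(D_i Q\cdot\overline{D_i S_\ell}\bigr) \geq -\frac{\beta}{\theta}\frac{|D_i Q|^2}{Q} - \beta\theta\, Q\frac{|D_i S_\ell|^2}{S_\ell^2},
\end{equation*}
valid for every $\theta>0$. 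Substituting this lower bound and then transferring $(\beta/\theta)|D_i Q|^2/Q$ to the left-hand side yields exactly the stated inequality, with the coefficients $(1-\beta+\beta/\theta)$ and $(\beta+1-\beta\theta)$ landing in their prescribed slots.

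The main obstacle is purely one of bookkeeping: expanding $F^{pq}/F$ cleanly via the logarithmic derivative trick and then matching the $\theta$-weighted AM--GM coefficients to the exact form in the lemma. Conceptually only two ingredients enter---the quotient-concavity assumption (packaged as negative semi-definiteness of $F^{pq}$) and one AM--GM split---and since the stated inequality involves only diagonal entries $Q^{p\ov p,q\ov q}$ and $S_\ell^{p\ov p,q\ov q}$, the off-diagonal matrix-eigenvalue corrections that would appear in a full Hermitian-concavity expansion never need to be addressed.
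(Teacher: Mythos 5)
Your derivation is correct and is, in substance, the same argument as the one in Guan--Ren--Wang that the paper simply cites ("the proof is similar to Lemma 2.2 in \cite{G-R-W}"): take the logarithmic Hessian of $F=(Q/S_\ell)^{\beta}$, use quotient concavity to make the resulting quadratic form nonpositive, test against the diagonal complex vector $\xi_p=w_{p\ov p i}$, multiply through by $Q/(\beta F)>0$, and absorb the cross term with a $\theta$-weighted Cauchy--Schwarz. All the coefficient bookkeeping checks out, and the two points you single out---that the symmetric NSD Hessian controls complex vectors because the skew part cancels, and that only the diagonal part of Ball's formula for $Q^{p\ov p,q\ov q}$ enters so eigenvalue concavity (rather than operator concavity) suffices---are exactly the observations that make the passage from real $\lambda$-concavity to the Hermitian inequality legitimate. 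The one implicit assumption worth flagging is that $Q>0$ and $S_\ell>0$ on $\Gamma_k$, so that $F$ and the multiplier $Q/(\beta F)$ are positive; this is guaranteed in the paper's setting ($\alpha_k>0$, $\sigma_s>0$ on $\Gamma_k$, and the $\beta_s^\ell$ chosen so that $S_\ell>0$), and must also hold for the quotient-concavity hypothesis to be meaningful.
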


The proof is similar to the proof of Lemma 2.2 in \cite{G-R-W} since $(Q/S_\ell)^{1/(k-\ell)}$ are concave functions.

\begin{lemma}[\cite{B}]
\label{B}
 Suppose that $F(A)= f(\lambda_1, \ldots, \lambda_n)$ is a symmetric function of the eigenvalues of a Hermitian matrix $A=({h^i}_j)$, then at a diagonal matrix $A$ with distinct eigenvalues, we have,
\begin{align}
\label{symmetric func 1th deriv} \frac{\partial F}{\partial {h^i}_j} =&\ \delta_{ij} f_i,\\
\label{symmetric func 2th deriv} \frac{\partial^2 F}{\partial {h^i}_j \partial {h^r}_s} {T^i}_j {T^r}_s
 =&\ \sum f_{ij} {T^i}_i {T^j}_j + \sum_{p\neq q}\frac{f_p - f_q}{\lambda_p-\lambda_q} | {T^p}_q |^2,
\end{align}
where $T$ is an arbitrary Hermitian matrix.
\end{lemma}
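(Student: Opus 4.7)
The strategy is the standard maximum-principle test-function argument adapted from the author's previous work \cite{DL}. I would consider
\[
G = \log \lambda_1 + \varphi(|Du|^2) + \eta(u),
\]
where $\lambda_1$ is the largest eigenvalue of the Hermitian endomorphism ${g^i}_j = \omega^{i\bar k} g_{j\bar k}$, and $\varphi$, $\eta$ are auxiliary one-variable functions to be chosen (typically $\eta$ a large multiple of $u$ or of $-e^{-Au}$, and $\varphi$ convex with $\varphi'$ pinned between two positive constants determined by $\sup|Du|$). Let $G$ attain its maximum at $x_0$; pick normal coordinates there so that $\omega_{i\bar j} = \delta_{ij}$ and $({g^i}_j)$ is diagonal with $\lambda_1 \geq \cdots \geq \lambda_n$, perturbing ${g^i}_j$ by a symmetric tensor if necessary to make the top eigenvalue simple so that $\log\lambda_1$ is smooth there. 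The hypothesis $\chi \geq \varepsilon \omega$ together with Lemma \ref{sigmak-1} provides lower bounds $\lambda_j \geq -K_0$, which will be needed throughout.

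Apply the linearized operator $L = Q^{p\bar q} D_p D_{\bar q}$ to $G$ at $x_0$. Lemma \ref{B} expands $L\log\lambda_1$ into the principal term $\lambda_1^{-1} Q^{p\bar p}({g^1}_1)_{p\bar p}$, the unimodular-gradient term $-\lambda_1^{-2} Q^{p\bar p}|({g^1}_1)_p|^2$, and the off-diagonal terms $\lambda_1^{-1} \sum_{j\ne 1}(\lambda_1-\lambda_j)^{-1}\bigl(|({g^1}_j)_p|^2 + |({g^1}_j)_{\bar p}|^2\bigr) Q^{p\bar p}$. Differentiating \eqref{Sk2} once and twice in the $\partial/\partial z^1$ and $\partial/\partial\bar z^1$ directions and commuting covariant derivatives via \eqref{order}--\eqref{order2}, I rewrite $({g^1}_1)_{p\bar p}$ in terms of $u_{p\bar p 1\bar 1}$ modulo curvature and torsion corrections, and substitute using the twice-differentiated equation to produce the concavity expression $-Q^{p\bar p,q\bar q}u_{p\bar p 1}u_{q\bar q\bar 1}$, the Phong--Picard--Zhang style contributions $\partial_v\psi\cdot u_{p\bar p 1}$, $\partial_{\bar v}\psi \cdot u_{p\bar p\bar 1}$, and zero-order terms in $\psi$, $\psi_u$, $\chi$. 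The contributions from $L\varphi(|Du|^2)$ and $L\eta(u)$ provide the positive bank $\eta'\mathcal Q$ together with the positive quadratic bank $\varphi'(|Du|^2)\bigl(|DDu|_Q^2 + |D\bar Du|_Q^2\bigr)$.

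The decisive ingredient is Lemma \ref{GRW} applied with $\ell=k-1$ and $\theta$ close to $k-\ell=1$. The quotient concavity of $Q$ then converts $-Q^{p\bar p,q\bar q}u_{p\bar p 1}u_{q\bar q\bar 1}$ into a controllable multiple of $|D_1 Q|^2/Q$ plus the genuinely new good cubic term $-(Q/S_{k-1})\,S_{k-1}^{p\bar p,q\bar q}u_{p\bar p 1}u_{q\bar q\bar 1}$. Because $g\in \Gamma_{k+1}(M)$, the factor $-S_{k-1}^{p\bar p,q\bar q}$ is positive semidefinite on pairs $p\ne q$ and, combined with $Q/S_{k-1}\geq c\lambda_1$ from property (6) of Lemma \ref{sigmak}, yields a good cubic contribution comparable to a positive multiple of $\sum_{p\ne q} u_{p\bar p 1}u_{q\bar q\bar 1}$. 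Together with the off-diagonal positive terms from $L\log\lambda_1$ this furnishes the bank of good third-order quantities that is unavailable from raw concavity of $Q$ alone, and which is the technical reason for assuming quotient concavity.

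The principal obstacle is to absorb the bad third-order terms into this bank. Three sources appear: (a) the Phong--Picard--Zhang terms involving $\partial_v\psi$ and $\partial_{\bar v}\psi$, which in the complex setting are only half as numerous as in the real case treated by Li--Ren--Wang; (b) the torsion terms $T \ast D^3 u$ inherited from \eqref{order}--\eqref{order2}, peculiar to the Hermitian setting; and (c) the new ``$H$'' and ``$I$'' terms produced by the linear gradient piece $\sqrt{-1}\,a\wedge\bar\partial u - \sqrt{-1}\,\bar a\wedge\partial u$ of $g$. Each is linear in $D^3u$ and would be estimated by Cauchy--Schwarz against the good cubic bank, the residual quadratic cost being absorbed by the $\varphi'$ and $\eta'\mathcal Q$ banks; this is precisely where the extra $\varphi(|Du|^2)$ factor in the author's test function from \cite{DL} pays off compared with the more minimal one of \cite{P-P-Z}. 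A Hou--Ma--Wu style split on the size of $|\lambda_n|/\lambda_1$ then closes the argument: if $\lambda_n \leq -\delta\lambda_1$, Lemma \ref{sigmak}(6) forces $\mathcal Q \geq c\lambda_1^{k-1}$ and the $\eta'\mathcal Q$ bank alone dominates everything; if $\lambda_n \geq -\delta\lambda_1$, the concavity and off-diagonal banks dominate. Either way, $\lambda_1(x_0)\leq C$, and the estimate $|D\bar D u|_\omega \leq C$ follows globally.
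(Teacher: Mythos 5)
Your proposal does not address the statement at hand. The statement is Lemma \ref{B}, the classical result of Ball on first and second derivatives of a symmetric function of the eigenvalues of a Hermitian matrix: at a diagonal matrix $A$ with distinct eigenvalues one must show $\partial F/\partial {h^i}_j = \delta_{ij} f_i$ and the second-variation identity involving $f_{ij}$ and the difference quotients $(f_p - f_q)/(\lambda_p - \lambda_q)$. What you have written instead is a sketch of the maximum-principle argument for the main Theorem \ref{thm1} (test function $\log\lambda_1 + \varphi(|Du|^2)+\eta(u)$, use of Lemma \ref{GRW}, absorption of torsion and gradient terms, and so on). None of that bears on the purely linear-algebraic, perturbation-theoretic content of Lemma \ref{B}; indeed your sketch \emph{uses} Lemma \ref{B} to expand $L\log\lambda_1$, so it cannot serve as a proof of it.

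A correct argument runs along entirely different lines: since the eigenvalues of $A$ are distinct, they are smooth functions of the entries near $A$; for a Hermitian perturbation $A+tT$ one computes from standard perturbation theory that at $t=0$, $\dot\lambda_i = {T^i}_i$ and $\ddot\lambda_i = 2\sum_{q\neq i} |{T^i}_q|^2/(\lambda_i - \lambda_q)$ (at the diagonal matrix), and then the chain rule applied to $F = f(\lambda_1,\dots,\lambda_n)$ yields \eqref{symmetric func 1th deriv} and, after symmetrizing the sum over $p\neq q$, \eqref{symmetric func 2th deriv}. One then notes, as the paper does, that the resulting expressions extend by continuity (as limits) when eigenvalues coincide. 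The paper itself simply cites \cite{B} for this, but if a proof is required, it is the eigenvalue-perturbation computation above, not the a priori estimate machinery you describe.
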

Note that these formulae make sense even when the eigenvalues are not distinct, since it can be interpreted as a limit.

Now we do some basic calculations which will be used in the next Section.
In the following, $C$ will be a uniform constant depending on the known data as in Theorem \ref{thm1} but may change from line to line.

Our calculations are carried out at a point $p_0$ on the manifold $M$, and we use coordinates such that at this point $\omega=\sqrt{-1} \sum \delta_{k \ell} dz^{k} \wedge d \overline{z}^{\ell}$ and $g_{i \ov{j}}$ is diagonal.
Also, $\lambda_{1}, \lambda_{2}, \dots, \lambda_{n}$ are the eigenvalues of
${g^i}_j$ with the ordering $\lambda_{1} \geq \lambda_{2} \geq \cdots \geq \lambda_{n}$.
Note that $\{Q^{p\ov q}\}$ is diagonal at the point $p_0$ by Lemma \ref{B}.
Moreover, we see
\[
Q^{p\bar p} = \frac{\partial Q}{\partial \lambda_p} = \sum_{s=1}^k \alpha_s \sigma_{s-1} (\lambda | p)\; \mbox{and} \;
Q^{p\bar p, q\bar q} = \frac{\partial^2 Q}{\partial \lambda_p \partial \lambda_q} = \sum_{s=1}^k \alpha_s \sigma_{s-2} (\lambda | pq),
\]
since $\sigma_s^{p\bar p} = \frac{\partial \sigma_s}{\partial \lambda_p} = \sigma_{s-1} (\lambda |p)$
and $\sigma_s^{p\bar p, q\bar q} = \frac{\partial^2 \sigma_s}{\partial \lambda_p \partial \lambda_q} = \sigma_{s-2} (\lambda |pq )$.
Using \eqref{symmetric func 2th deriv}, one can obtain the well-known identity
\begin{equation}
\label{identity}
- Q^{p \overline{q}, r \overline{s}} D_{j} g_{p \overline{q}} D_{\overline{j}} g_{r \overline{s}}
= -Q^{p \overline{p}, q \overline{q}} D_{j} g_{p \overline{p}} D_{\overline{j}} g_{q \overline{q}}
+ Q^{p \overline{p}, q \overline{q}}\left|D_{j} g_{q \overline{p}}\right|^{2}.
\end{equation}

Differentiating \eqref{Sk2} yields
\begin{equation}\label{differential equ}
Q^{p \overline{q}} D_{i} g_{p \overline{q}}=D_{i} \psi.
\end{equation}
Differentiating the equation a second time gives
\begin{equation}\label{diff equ second time}
\begin{split}
& Q^{p \overline{q}} D_{\ov{j}} D_{i} g_{p \ov q}+ Q^{p \overline{q}, r \overline{s}}D_{\overline{j}} g_{r \overline{s}} D_{i} g_{p \overline{q}}=D_{\ov j}D_{i} \psi \\
\geq & -C(1+|DDu|^{2}+|D \ov{D} u|^{2})+\sum_\ell \psi_{v_\ell} u_{\ell i \ov{j}} + \sum_\ell \psi_{\ov{v}_\ell} u_{\ov{\ell} i \ov{j}}\\
\geq & -C(1+|DDu|^{2}+|D \ov{D} u|^{2})+\sum_\ell \psi_{v_\ell} g_{i \ov{j} \ell} + \sum_\ell \psi_{\ov{v}_\ell} g_{i \ov{j} \ov{\ell}}-C \lambda_{1}.
\end{split}
\end{equation}
Direct calculation gives the estimate
\begin{align}\label{4th order term}
Q^{p \ov{q}} D_{\ov{q}} D_p g_{i \ov{j}}
\geq Q^{p \ov{q}} D_{\ov{q}} D_p D_{\ov{j}} D_i u +  Q^{p \ov{q}} D_{\ov{q}} D_p (a_i u_{\ov j} + a_{\ol j} u_{i})
- C \lambda_1 \mathcal{Q}.
\end{align}
By \eqref{order} and \eqref{order2}, commuting derivatives yields that
\begin{equation}
\begin{aligned}\label{commute 4th order deri}
u_{i\bar j p \bar q}
=&\ D_{\ov{j}} D_i g_{p \ov{q}} -D_{\ov{j}} D_i (a_p u_{\ol q} + a_{\ol q} u_p) - D_{\ov{j}} D_i \chi_{p \ov{q}} \\
&\ -  R_{i \ov{j} p}{}^{a} u_{a \ov{q}} + R_{p \ov{q} i}{}^{a} u_{a \ov{j}}
 -T_{p {i}}^{a} u_{a \ov{q} \ov{j}} -\ov{T_{q j}^{a}} u_{p \ov a i}-T_{i p}^{a} \ov{T_{q j}^{b}} u_{a \ov{b}}.
\end{aligned}
\end{equation}
Combining \eqref{diff equ second time}, \eqref{4th order term} and \eqref{commute 4th order deri}, we have
\begin{equation}
\begin{aligned}\label{dif-eqn2}
&\ Q^{p \overline{p}} D_{\overline{p}} D_{p} g_{j \overline{j}}\\
\geq &\ -Q^{p \overline{q}, r \overline{s}} D_{\ov{j}}  g_{r \overline{s}} D_{j} g_{p \overline{p}}+\sum \psi_{v_\ell}
g_{j \ov{j} \ell} + \sum \psi_{\ov{v}_\ell} g_{j \ov{j} \ov{\ell}}- Q^{p \ov{p}} (T_{p j}^{a} u_{a \overline{p} \overline{j}} \\
&\ +\overline{T_{p j}^{a}} u_{p \overline{a} j})-C(1+|DDu|^{2}+|D \ov{D}u|^{2}+{\lambda}_{1} \mathcal{ Q}+ \lambda_{1})\\
&\ + Q^{p \overline{p}}(a_j u_{\ov j p \ov p} + a_{\ov j} u_{j p \ov p} - a_p u_{\ov p j \ov j} - a_{\ov p} u_{pj \ov j})\\
&\ + Q^{p \overline{p}} (a_{jp} u_{\ov j \ov p}+a_{\ov j\ov p}u_{jp}- a_{pj}u_{\ov p \ov j}-a_{\ov p \ov j }u_{pj}).
\end{aligned}
\end{equation}
Using the Cauchy inequality, we have
\begin{equation}
\label{dif-eqn2-1}
\begin{aligned}
 Q^{p \overline{p}} (a_{jp} u_{\ov j \ov p}+a_{\ov j \ol p}u_{jp}-a_{pj}u_{\ov p \ov j}-a_{\ov p \ol j}u_{pj})
\leq \frac{1}{4}|DDu|_{ Q }^{2} + C\mathcal{ Q }.
\end{aligned}
\end{equation}
By \eqref{order}
direct calculation gives
\begin{equation}
\begin{aligned}\label{(Du)_pq 2}
& Q^{p \overline{q}}|D u|_{p \overline{q}}^{2}\\
= &\ Q^{p\bar q} (u_{mp\bar q} D^m u + u_m u_{\bar \ell p \bar q} \omega^{m \bar\ell})
+ |D D u|_{Q }^{2}+|D \ov{D} u|_{Q }^{2}\\
=&\ Q^{p \overline{q}} D_{m}\left( g_{p \overline{q}}-\chi_{p \overline{q}}\right) D^{m} u - Q^{p \overline{q}} T_{pm}^{t} u_{t \ov{q}} D^{m}u \\
& \ + Q^{p \overline{q}} u_{t} R_{p \ov q m}{}^{t} D^m u+Q^{p \overline{q}} u_m \omega^{m \ov \ell} D_{\ov \ell}\left( g_{p \overline{q}}-\chi_{p \overline{q}}\right) \\
&\ - Q^{p \overline{q}} u_m \omega^{m \ov{\ell}} \ov {T_{q \ell}^{t}}u_{p \ov t}
+|D D u|_{Q}^{2}+|D \ov{D} u|_{Q}^{2}\\
&\ - Q^{p \overline{q}} D_m (a_p u_{\ov q} +a_{\ov q} u_p)D^m u
- Q^{p \overline{q}} D_{\ov \ell} (a_p u_{\ov q} +a_{\ov q} u_p) u_m \omega^{m \ov \ell}.
\end{aligned}
\end{equation}
Using the Cauchy inequality, we have
\begin{align}\nonumber
&\ | Q^{p \overline{q}} T_{mp}^{t} u_{t \ov{q}} D^{m}u|
+|Q^{p \overline{q}} u_{m} \omega^{m \overline{\ell}} \overline{T_{q \ell}^{t}} u_{p \overline{t}}| \\ \nonumber
&\ +| Q^{p \overline{q}} D_m (a_p u_{\ov q} +a_{\ov q} u_p)D^m u|
+|Q^{p \overline{q}} D_{\ov \ell} (a_p u_{\ov q} +a_{\ov q} u_p) u_m \omega^{m \ov \ell}| \\ \nonumber
\leq &\ \frac{1}{2}|D \ov{D} u|_{Q }^{2}+\frac{1}{2}|DDu|_{Q }^{2}+C\mathcal{ Q }.
\end{align}
Substituting the above inequality into \eqref{(Du)_pq 2}, we get
\begin{equation}
\begin{aligned}
& Q^{p \overline{q}}|D u|_{p \overline{q}}^{2} \\
\geq &\ D_{m}(Q) u_{\ov{\ell}}\omega^{m \ov{\ell}}+D_{\ov{\ell}}(Q)u_m \omega^{m \ov{\ell}}
+\frac{1}{2}|DDu|_{ Q }^2+\frac{1}{2}|D\ov{D} u|_{Q }^2-C\mathcal{ Q }.
\end{aligned}
\end{equation}
Using the differential equation \eqref{differential equ}, we obtain
\begin{equation}
\begin{aligned}\label{derivative of Du'}
Q^{p \overline{q}}|D u|_{p \overline{q}}^{2}
\geq &\; 2 \operatorname{Re}\left\{\sum_{p, m}\left(D_{p} D_{m} u D_{\overline{p}} u+D_{p} u D_{\overline{p}} D_{m} u\right) \psi_{v_{m}}\right\}\\
&\; -C-C\mathcal{Q}+\frac{1}{2}|D D u|_{ Q }^{2}+\frac{1}{2}|D \ov{D} u|_{ Q }^{2}.
\end{aligned}
\end{equation}
We also compute that
\[
Q^{p \bar q} g_{p \bar q} = \sum_{s=1}^{k} \alpha_s \sigma_s^{p \bar q} g_{p\bar q}
= \sum_{s=1}^{k} \alpha_s s \sigma_s (\lambda) \leq k\psi.
\]
Hence,
\begin{equation}
\label{D-u}
\begin{aligned}
- Q^{p\bar q } u_{p\bar q} = &\ Q^{p\bar q} (\chi_{p\bar q} + a_p u_{\bar q} + a_{\bar q} u_p - g_{p\bar q})\\
\geq &\ \varepsilon \mathcal{Q} + Q^{p\bar q} ( a_p u_{\bar q} + a_{\bar q} u_p ) - k\psi.
\end{aligned}
\end{equation}

\section{Proof of Theorem \ref{thm1}}

We denote by $\lambda_{1}, \lambda_{2}, \dots, \lambda_{n}$ the eigenvalues of the Hermitian endomorphism
${g^i}_j = {\omega^{i} }_{\bar k } g_{j \overline{k}}$.
When $k=1$, it follows from the equation \eqref{eqn} that $\Delta_\omega u$ is bounded, and the desired estimate follows in turn from $g\in \Gamma_2 \subset \{\lambda| \sum \lambda_{i_1} + \cdots + \lambda_{i_{n-1}} \geq 0\}$. Henceforth, we assume that $k \geq 2$.

Since $g \in \Gamma_{k+1}$, by Lemma \ref{sigmak-1}, there exists a positive constant $K_0$ depending on $\psi$, $\sup_{M}|u|$, $\sup_{M}|Du|$ and $\alpha_k$ such that
 $\lambda_i (g) + K_0 \geq 0$ for any $1\leq i \leq n$.
We apply the maximum principle to the following test function:
\begin{equation}
\label{test}
G=\log P_{m}+\varphi(|Du|^{2})+\phi(u),
\end{equation}
where $P_{m}=\sum_{j} \kappa_{j}^{m}$ and $\kappa_j = \lambda_j (g) + K_0$. Here, $\varphi$ and $\phi$ are positive functions to be determined later,
which satisfy the following assumptions
\begin{equation}
\label{testfun}
\varphi''- 2\phi''(\frac{\varphi'}{\phi'})^{2} \geq 0, \;\varphi' > 0, \; \phi' < 0, \; \phi''>0.
\end{equation}
We may assume that the maximum of $G$ is achieved at some point $p_0 \in M$.
We choose the coordinate system centered at $p_0$ such that $\omega=\sqrt{-1} \sum \delta_{k \ell} dz^{k} \wedge d \overline{z}^{\ell}$,
$g_{i \ov{j}}$ is diagonal,
and the eigenvalues of the Hermitian endomorphism ${g^i}_j$ are ordered as
 $\lambda_{1} \geq \lambda_{2} \geq \cdots \geq \lambda_{n}$.

Differentiating $G$ ,
we first obtain the critical equation
\begin{equation}\label{critical equ}
\frac{D P_{m}}{P_{m}}+\varphi' D|D u|^{2}+\phi' Du=0.
\end{equation}
Differentiating $G$ a second time, using \eqref{symmetric func 2th deriv} and contracting with $Q^{p \overline{q}}$ yields
\begin{equation}\label{diffrrentiate G second time}
\begin{aligned}
0 \geq &\ \frac{m}{P_m}  \sum_j \kappa_j^{m-1} Q^{p \ov{p}} D_{\ov p} D_{p} g_{j \ov{j}}+\frac{m Q^{p \ov{p}}}{P_m} (m-1)\sum_j \kappa_j^{m-2} |D_p g_{j \ov{j}}|^2\\
&\ +\frac{m Q^{p \ov{p}}}{P_m} \sum_{i \neq j} \frac{\kappa_i^{m-1} - \kappa_j^{m-1}}{\kappa_i - \kappa_j} |D_p g_{i \ov{j}}|^2
+ Q^{p \ov{p}} \left( \phi'' D_p u D_{\ov p}u+\phi' u_{p \ov p} \right)\\
&\ + Q^{p \ov{p}}\left( \varphi'' D_p |Du|^2 D_{\ov p}|Du|^2+\varphi'|Du|_{p \ov p}^2 \right)- \frac{|D P_m|^2_Q}{P_m^2}.
\end{aligned}
\end{equation}
Here we used the notation introduced in Section 2.

Using the critical equation \eqref{critical equ}, we obtain
\begin{equation}
\begin{aligned}\label{1th order term}
D_p u D_{\ov p}u
\geq &\ \frac{1}{2 (\phi')^2 } \frac{|D_p P_m|^2}{P_m ^2 }-(\frac{\varphi^{\prime}}{\phi^{\prime}})^2 \big|D_p |Du|^2 \big|^2.
\end{aligned}
\end{equation}
Substituting  \eqref{1th order term} into \eqref{diffrrentiate G second time} and by \eqref{dif-eqn2}, \eqref{dif-eqn2-1},
\eqref{derivative of Du'}, \eqref{D-u}
\begin{equation}
\label{main inequality 1}
\begin{aligned}
0 \geq &\ -\frac{C\sum_{j}\kappa_j^{m-1}}{P_m} (1+|DDu|^2+|D\ov{D}u|^2+(1+\lambda_1)\mathcal{Q}+\lambda_1) \\
&\ +\frac{\sum_j\kappa_j^{m-1}}{P_m} \Big(-Q^{p\overline{q},r\overline{s}} D_{\ov{j}} g_{r \overline{s}}{D_{j}g_{p \overline{q}}}+\sum_{\ell}{\psi_{v_\ell}}g_{j \overline{j} \ell}
+\sum_{\ell}{\psi_{\overline{v}_{\ell}}}g_{j \overline{j}\overline{\ell}}\\
&\ - Q^{p\overline{p}}(T_{pj}^au_{a \ov p \ov j}+\ov{T_{pj}^a}u_{p \ov a j}) \Big)
+ \frac{m-1}{P_m}  \sum_{j}\kappa_j^{m-2} Q^{p\overline{p}}|{D_p}g_{j \overline{j}}|^2\\
&\ + \frac{ Q^{p\overline{p}} }{P_m} \sum_{i\ne j}{\frac{\kappa_i^{m-1}-\kappa_j^{m-1}}{\kappa_i-\kappa_j}}|{D_p}{g_{i \overline{j}}}|^2 - \Big(1-\frac{\phi''}{2({\phi'})^2}\Big) \frac{|DP_m|_{Q}^2}{mP_m^2} \\
&\  +\frac{\varphi'}{m}\bigg( \frac{1}{4}|DDu|_{Q}^2+\frac{1}{4}|D \ov{D} u|_{Q}^2 \bigg)
+\bigg(-\frac{\phi'}{m}\varepsilon-C\frac{\varphi'}{m} \bigg)\mathcal{Q}\\
&\ +2\frac{\varphi'}{m}\operatorname{Re}\Big( \sum_{p,m}( u_{mp} u_{\bar p}+u_pu_{m\bar p})\psi_{v_m} \Big)
- 2\frac{\phi'}{m} Q^{p\bar p} \operatorname{Re} \{a_p u_{\bar p} \}\\
&\ +k\frac{\phi'}{m}\psi-C\frac{\varphi'}{m}+\frac{Q^{p\overline{p}}}{m}\Big(\varphi''-\phi'' \frac{\varphi'^2}{\phi'^2} \Big) \bigg|D_p|Du|^2\bigg|^2\\
&\ + \frac{\sum_{j}\kappa_j^{m-1}}{P_m} Q^{p\bar p} (a_j u_{\ov j p \ov p}+a_{\ov j} u_{j p \ov p}
- a_p u_{\ov p j \ov j} - a_{\ov p} u_{pj \ov j})
\end{aligned}
\end{equation}
as long as $\frac{\varphi'}{4m} \geq \frac{C}{\kappa_1}$. Here we used $\varphi' > 0$ and $\phi' < 0$ in \eqref{testfun}.

From the critical equation \eqref{critical equ}, we obtain
\begin{equation}
\begin{aligned}
&\frac{1}{P_m}\sum_j \kappa_j ^{m-1} \sum_{\ell} (\psi_{v_{\ell}}D_{\ell} g_{j \ov j}+\psi_{\ov{v}_{\ell}}D_{\ov{\ell}} g_{j \ov j})\\
&+2\frac{\varphi'}{m}\operatorname{Re}\Big( \sum_{p,m}( u_{mp} u_{\bar p}+u_pu_{m\bar p})\psi_{v_m} \Big) \\
=&\ -\frac{\phi'}{m} \sum_{\ell}(u_{\ell} \psi_{v_{\ell}}+
u_{\ov{\ell}} \psi_{\ov{v}_{\ell}})-\frac{2 \varphi'}{m}\operatorname{Re}(\sum_{p,k,m}\psi_{v_m}T_{pm}^{k}u_k u_{\ov p}) \\
\geq &\ C\frac{\phi'}{m}-C\frac{\varphi'}{m},
\end{aligned}
\end{equation}
and
\begin{equation}
\begin{aligned}
2\frac{\phi'}{m} Q^{p\bar p} \operatorname{Re} \{a_p u_{\bar p} \}
= &\ \Big|\frac{2 Q^{p \overline{p}}}{m} \operatorname{Re} \left\{ a_p (\frac{D_{\ov p} {{P}_m}}{{P}_m}+\varphi' D_{\ov p}|Du|^2) \right\} \Big| \\
\leq &\ \frac{\phi''}{4{\phi'}^2} \frac{|D{P_m}|_{Q}^2}{m{P}^2_m}+ \frac{ Q^{p \overline{p}}}{m} {\varphi'}^2 \frac{\phi''}{\phi'^2}\bigg|D_p|Du|^2\bigg|^2+C\frac{\phi'^2}{\phi''} \mathcal{ Q}.
\end{aligned}
\end{equation}

With the above calculations and \eqref{identity}, we have
\begin{equation}
\begin{aligned}\label{main inequality 2}
0 \geq
&\ \frac{1}{{P_m}}\sum_j{\kappa}_j^{m-1}\bigg(- Q^{p\overline{p},q\overline{q}} {D_{j} g_{p \overline{p}}} D_{\ov{j}} g_{q \overline{q}}+ Q^{p\overline{p},q\overline{q}}|D_j g_{q \ov p}|^2 \bigg) \\
&\ -\frac{2 Q^{p\overline{p}}}{P_m}\sum_j \kappa_j^{m-1}
\operatorname{Re}\Big(\ov{T_{pj}^s}u_{p \ov s j}\Big)
+ \frac{(m-1) Q^{p\overline{p}}}{P_m} \sum_j \kappa_j^{m-2}|{D_p} g_{j \overline{j}}|^2 \\
&\ +\frac{ Q^{p\overline{p}}}{{{P}_m}} \sum_{i\ne j}{\frac{\kappa_i^{m-1}
- \kappa_j^{m-1}}{\kappa_i-\kappa_j}}|{D_p}{g_{i \overline{j}}}|^2
-\bigg(1-\frac{\phi''}{ 4 (\phi')^2}\bigg)\frac{ |DP_m|_{Q}^2 }{m P_m^2} \\
&\ + \frac{\varphi'}{4 m} \bigg(|DDu|_{ Q }^2+|D \ov{D} u|_{ Q }^2 \bigg)
+C\frac{\phi'}{m}-C\frac{\varphi'}{m}-C\\
&\ +\bigg(-\frac{\phi'}{m}\varepsilon-C\frac{\varphi'}{m}-C-C\frac{\phi'^2}{\phi''}\bigg)\mathcal{ Q }
-\frac{C}{\lambda_1}(|DDu|^2+|D\ov{D}u|^2)\\
&\  + \frac{ Q^{p\overline{p}}}{{P_m}}\sum_j \kappa_j^{m-1}
\Big(a_j u_{\ov j p \ov p}+a_{\ov j} u_{j p \ov p}-a_p u_{\ov p j \ov j}-a_{\ov p} u_{pj \ov j} \Big) ,
\end{aligned}
\end{equation}
where we used $\varphi''- 2 \phi'' \frac{\varphi'^2}{\phi'^2} \geq 0$ in \eqref{testfun} and $\kappa_{1} \gg 1$.

Let
\begin{equation}
\begin{aligned}
{\tilde A_j} = &\; \frac{1}{P_m}\kappa_j^{m-1} \sum_{p,q} Q^{p \overline{p}, q \overline{q}} D_{j} g_{p \overline{p}} D_{\overline{j}} g_{q \overline{q}},\;\;\;\;
{\tilde B_q}= \frac{1}{P_{m}} \sum_{j,p} \kappa_{j}^{m-1} Q^{p \overline{p}, q \overline{q}}\left|D_{j} g_{q \overline{p}}\right|^{2},
\nonumber\\
C_p=&\; \frac{m-1}{P_m} Q^{p \overline{p}} \sum_{j} \kappa_{j}^{m-2} \left|D_{p} g_{j \overline{j}}\right|^{2},\;\;\;\;\;\;\;
{\tilde D_p}=\frac{ Q^{p \overline{p}}}{P_m} \sum_{j \neq i} \frac{\kappa_{i}^{m-1}-\kappa_{j}^{m-1}}{\kappa_{i}-\kappa_{j}}\left|D_{p} g_{i \overline{j}}\right|^{2},\nonumber\\
E_i=&\;\frac{m}{P^2_m} Q^{i \ov i}|\sum_p \kappa_p^{m-1}D_i g_{p \ov p}|^2,\;\;\;\;\;\;\;\;\;\;\;\;\;\;
H_p= \frac{2 Q^{p \overline{p}}}{P_{m}} \sum_{j,s} \kappa_{j}^{m-1}\operatorname{Re}(\ov{T_{pj}^s}u_{p \ov s j})\nonumber,
\end{aligned}
\end{equation}
and
\[
I_p= \frac{Q^{p\overline{p}}}{P_m}\sum_j \kappa_j^{m-1}\Big(a_j u_{\ov j p \ov p}+a_{\ov j} u_{j p \ov p}-a_p u_{\ov p j \ov j}-a_{\ov p} u_{pj \ov j}\Big).
\]
Then \eqref{main inequality 2} becomes
\begin{equation}
\begin{aligned}\label{main inequalit 3}
0\geq &\ -\sum_j {\tilde A_j}+\sum_q {\tilde B_q}+\sum_p C_p+\sum_p {\tilde D_p}-\sum_p H_p + \sum_p I_p\\
&\ -\Big(1-\frac{\phi^{\prime \prime}}{ 4 (\phi^{\prime})^{2}}\Big)\sum_i E_i
+\frac{\varphi^{\prime}}{4 m}\left(|D D u|_{Q }^{2} + |D \ov{D} u|_{Q }^{2}\right) \\
&\ +\left(-\frac{\phi^{\prime}}{m} \varepsilon-C \frac{\varphi^{\prime}}{m}-C-C\frac{\phi'^2}{\phi''}\right) \mathcal{Q}
-C\Big(- \frac{\phi^{\prime}}{m}+\frac{\varphi^{\prime}}{m}+1\Big)\\
&\ -\frac{C}{\lambda_{1}}\left(|D D u|^{2}+|D \ov{D} u|^2\right).
\end{aligned}
\end{equation}

We first deal with the torsion term $H_p$.
For any $0< \tau <1$, by \eqref{order} we can estimate
\begin{equation}\nonumber
\begin{aligned}
H_p
\leq &\; \frac{2 Q^{p\ov p}}{{{P}_m}} \sum_{j,s} \kappa_j^{m-1} |\ov{T_{pj}^s} D_p g_{j\ov s }| + C Q^{p\ov p} +
\frac{C}{\kappa_1} Q^{p\ov p}\sum_{j} |u_{jp}|^2\\
\leq &\;  \frac{\tau}{2} \frac{Q^{p\ov p}}{{P}_m} \sum_{j,s} \kappa_j^{m-2} |D_p g_{j\ov s }|^2+ \frac{C}{\tau} Q^{p\ov p}+\frac{C}{\kappa_1} Q^{p\ov p}\sum_{j} |u_{jp}|^2.
\end{aligned}
\end{equation}
Similarly, we can estimate
\begin{equation}\nonumber
\begin{aligned}
|I_p|
\leq &\; \frac{\tau}{4} \frac{Q^{p\ov p}}{{{P}_m}} \sum_{j}  {\kappa}_j^{m-2}
\Big(|D_p g_{j\ov p }|^2 + |D_p  g_{j\ov j }|^2\Big)
+ \frac{C}{\tau} Q^{p\ov p}+\frac{C}{\kappa_1} Q^{p\ov p} \sum_{j} |u_{jp}|^2\\
\leq &\; \frac{\tau}{2} \frac{Q^{p\ov p}}{{{P}_m}} \sum_{j,s}  \kappa_j^{m-2} |D_p g_{j\ov s }|^2
+ \frac{C}{\tau} Q^{p\ov p}+\frac{C}{\kappa_1} Q^{p\ov p} \sum_{j} |u_{jp}|^2.
\end{aligned}
\end{equation}
By direct computation, we have
\begin{align}
 \tilde D_p = \frac{Q^{p\ov p}}{P_m} \sum_{j\neq i} \sum_{s=0}^{m-2}\kappa_i^{m-2-s}\kappa_j^s |D_p g_{i\ov j}|^2
 \geq \frac{Q^{p\ov p}}{P_m} \sum_{j\neq i}  \kappa_i^{m-2}|D_p g_{i \ov j}|^2.
\end{align}
Now we have
\begin{equation}
\label{HICD}
\begin{aligned}
&\; - \sum_p H_p +\sum_p I_p+ \sum_p C_p + \sum_p \tilde D_p \\
\geq &\; (1-\tau) \sum_p \tilde D_p + (1 - \tau)\sum_p C_p - \frac{C}{\tau}\mathcal{ Q }-\frac{C}{\kappa_1}|DDu|_{Q }^2,
\end{aligned}
\end{equation}
as $m>2$ will be chosen large enough.
Substituting \eqref{HICD} into \eqref{main inequalit 3} yields
\begin{equation}
\begin{aligned}
\label{main inequality 4}
0\geq &\; -\sum_j \tilde{A_j}+\sum_q {\tilde B_q}
+(1 - \tau) \bigg(\sum_p C_p + \sum_p {\tilde D_p} \bigg)\\
&\;-\left(1-\frac{\phi^{\prime \prime}}{ 4 (\phi^{\prime})^{2}}\right)\sum_i E_i
+\frac{\varphi^{\prime}}{4m}\left(|D D u|_{Q}^{2}+|D \ov{D} u|_{Q}^{2}\right) \\
&\ +\left(-\frac{\phi^{\prime}}{m} \varepsilon -C \frac{\varphi^{\prime}}{m}-\frac{C}{\tau} - C\frac{\phi'^2}{\phi''}\right) \mathcal{Q}
-C(\varphi'- \phi'+1)\\
&\ -\frac{C}{\lambda_{1}}\left(|D D u|^{2}+|D \ov{D} u|^2\right).
\end{aligned}
\end{equation}

Now we shall denote
\[\begin{aligned}
A_j = &\; \frac{1}{P_m} \kappa_j^{m-1} \Big(K |D_j Q|^2 - Q^{p\ov p, q\ov q} D_j g_{p\ov p}D_{\ov j} g_{q\ov q}\Big),\\
B_q = &\; \frac{1}{P_m} \sum_p \kappa_p^{m-1} Q^{p\ov p, q\ov q}|D_q g_{p\ov p}|^2,\\
D_i = &\; \frac{1}{P_m} \sum_{p\neq i} Q^{p\ov p} \frac{\kappa_p^{m-1} - \kappa_i^{m-1}}{\kappa_p -\kappa_i} |D_i g_{p\ov p}|^2.
\end{aligned}\]
Taking $\ell=1$ and $\theta = 1/2$ in Lemma \ref{GRW}, we see
that $A_j \geq 0$ for $1 \leq j \leq n$ for $K>(1 + \beta )(\inf \psi)^{-1}$ if $2 \leq k\leq n$.
Define
\[
H_{j \ov q p} = D_j(\chi_{p\ov q}+a_p u_{\ov q}+a_{\ov q} u_p) - D_p (\chi_{j\ov q}+a_j u_{\ov q}+a_{\ov q} u_j).
\]
Note that $|H_{j\ov q p}|\leq C+C\lambda_1$, since $u_{pj}-u_{jp}=T_{pj}^{k} u_{k} \leq C$.
For any constant $0<\tau <1$, we can estimate
\begin{equation}\nonumber
\begin{aligned}
\sum_q \tilde B_q
\geq &\;
\frac{1}{P_m} \sum_{j,q} \kappa_j^{m-1} Q^{j\ov j, q\ov q} |D_j g_{q \ov j }|^2\\
= &\; \frac{1}{P_m} \sum_{j,q} \kappa_j^{m-1} Q^{j\ov j, q\ov q} |D_q g_{j\ov j} - T_{jq}^a u_{a \ov j} + H_{j\ov j q}|^2\\
\geq &\; \frac{1}{P_m}\sum_{q,j} \kappa_j^{m-1} Q^{j\ov j, q\ov q}
\Big( (1-\tau)|D_q g_{j \ov j}|^2 - \frac{1}{\tau} | H_{j\ov j q}- T_{jq}^a u_{a\ov j} |^2\Big)\\
= &\; (1-\tau) \sum_q B_q - \frac{1}{\tau P_m} \sum_{q,j} \kappa_j^{m-1} Q^{j\ov j, q\ov q}
| H_{j\ov j q} - T_{jq}^a u_{a\ov j} |^2.
\end{aligned}
\end{equation}
By Lemma \ref{sigmak} (4) and our assumption $\lambda \in \Gamma_{k+1}$,
we have $\sigma_{s-1} (\lambda | jq) > 0$ for any $1\leq j, q \leq n$ and $1\leq s \leq k$.
We then see $\sigma_s^{j\bar j,q \bar q} \kappa_j
\leq \sigma_{s}^{q\bar q} + K_0 \sigma_s^{j\bar j,q \bar q}$,
which implies that
\[
\sigma_s^{j\bar j,q \bar q} \kappa_j^{m-1} \leq \sigma_{s}^{q\bar q} \sum_{l=0}^{m-2} \kappa_j^{l}K_0^{m-2-l}
+ K_0^{m-1} \sigma_s^{j\bar j,q \bar q}.
\]
By Lemma \ref{sigmak} (2) and (6), we have
\[
\sum_{j,q} \sigma_s^{j\bar j,q \bar q} = c_{s,n} \sigma_{s-2} (\lambda) \leq C \kappa_1^{s-2},
\]
where $c_{s,n} = (n-s+1 )(n-s+2)$.
Now, we obtain
when $m\geq k$ and $\kappa_1$ is large enough,
\[
\frac{1}{\tau P_m} \sum_{q,j} \kappa_j^{m-1} Q^{j\ov j, q\ov q} |H_{j\ov j q}- T_{jq}^a u_{a\ov j} |^2
\leq \frac{C \kappa_1^2}{\tau P_m} \sum_{q,j} \kappa_j^{m-1} Q^{j\ov j, q\ov q}
\leq \frac{C}{\tau}\mathcal{Q} + \frac{C}{\tau},
\]
We finally arrive at
\begin{align}\nonumber
\sum_q \tilde B_q
\geq &\; (1-\tau)\sum_q B_q -\frac{C}{\tau}\mathcal{Q} - \frac{C}{\tau}\nonumber.
\end{align}
Similarly, we can estimate
\[\begin{aligned}
\sum_p \tilde D_p \geq &\; \frac{1}{P_m} \sum_{j\neq i} Q^{j\ov j}
\frac{\kappa_i^{m-1}- \kappa_j^{m-1}}{\kappa_i -\kappa_j}|D_j g_{i\ov j}|^2\\
\geq &\; \frac{1}{P_m} \sum_{j\neq i} Q^{j\ov j} \frac{\kappa_i^{m-1}- \kappa_j^{m-1}}{\kappa_i -\kappa_j}
\Big((1-\tau) |D_i g_{j\ov j}|^2 - \frac{C}{\tau} \kappa_1^2 \Big)\\
\geq &\; (1-\tau)\sum_i D_i - \frac{C}{\tau} \mathcal{Q}.
\end{aligned}\]
Note that $\frac{\kappa_j^{m-1}}{P_m} |D_j Q|^2 \leq \frac{C}{\kappa_1} (|DDu|^2+ |D\ov D u|^2)$.
Then \eqref{main inequality 4} becomes
\begin{equation}
\begin{aligned}
\label{main inequality 6}
0\geq &\; (1-\tau)^2 \sum_i \Big(  A_i+  B_i+ C_i+  D_i\Big)-\left(1-\frac{\phi^{\prime \prime}}{4 (\phi')^{2}}\right)\sum_i E_i\\
&\; +\frac{\varphi^{\prime}}{4m}\left(|D D u|_{Q}^{2}+ |D \ov{D} u|_{Q}^{2}\right)
 -\frac{C(K)}{\kappa_{1}}\left(|D D u|^{2}+|D \ov{D} u|^2\right)\\
&\ +\left(-\frac{\phi^{\prime}}{m} \varepsilon-C \frac{\varphi^{\prime}}{m} - \frac{C}{\tau}
- C\frac{\phi'^2}{\phi''} \right) \mathcal{Q}
-C(\varphi'- \phi'+\frac{1}{\tau}),
\end{aligned}
\end{equation}
when $\lambda_1$ is sufficiently large.

Now we choose $\phi$ and $\varphi$ to satisfy \eqref{testfun}. Let $\varphi (t) = e^{Nt}$ and $\phi(s) = e^{M(-s+L)}$ where $L \geq |u|_{C^1}+1$ is a constant.
Then, we see
\[
\varphi''- 2 \phi'' \frac{\varphi'^2}{\phi'^2} = N^2 e^{Nt} - 2 \frac{N^2 e^{2Nt}}{e^{M(-s +L)}} >0, \quad \varphi' >0, \quad \phi' < 0, \quad \phi''>0,
\]
when $M \gg N > 1$, which shows the assumption \eqref{testfun} is satisfied.
Choosing
\[
2 \tau = \frac{\phi''}{4(\phi')^2} = \frac{1}{4 e^{M(-u(p) + L)}},
\]
we obtain that $(1-\tau)^2 \geq 1-\frac{\phi^{\prime \prime}}{ 4 \phi'^{2}} $.
By Lemma \ref{sigmak} (3) and (5), we have
\[
Q^{i \overline{i}} \geq Q^{1 \overline{1}} \geq \frac{Q}{n \lambda_{1}} \geq \frac{1}{C \lambda_{1}}
 \]
for any fixed $i$, where $C$ depends on $\inf \psi > 0$ and other known data. We can estimate
\[
|D D u|_{Q }^{2} + |D \ov{D} u|_{Q }^{2} \geq \frac{1}{C \lambda_{1}}\left(|D D u|^{2}+|D \overline{D} u|^{2}\right) \geq \frac{1}{C\lambda_1} |DDu|^2 + \frac{\lambda_1}{C}.
\]
Now \eqref{main inequality 6} becomes
\begin{equation}
\label{main inequality 7}
\begin{aligned}
0\geq &\; (1-\tau)^2 \sum_i (A_i + B_i + C_i + D_i - E_i)\\
&\; + \Big(\frac{\varphi'}{m C} - C(K)\Big)\lambda_1 + \frac{1}{\lambda_1} \Big(\frac{\varphi'}{m C} - C(K)\Big) |DDu|^2 \\
&\; +\left(-\frac{\phi^{\prime}}{m} \varepsilon-C \frac{\varphi^{\prime}}{m} - \frac{C}{\tau} - C\frac{\phi'^2}{\phi''} \right) \mathcal{Q}
-C(\varphi'- \phi'+ \frac{1}{\tau}).
\end{aligned}
\end{equation}
Taking $N$ large enough, we can ensure that $\frac{\varphi'}{m C} - C(K)>0$.
For fixed $N$, it follows that
\[-\frac{\phi^{\prime}}{m} \varepsilon-C \frac{\varphi^{\prime}}{m} - \frac{C}{\tau} - C\frac{\phi'^2}{\phi''}
=\frac{M}{m} \varepsilon \phi - C \frac{N}{m} \varphi - C \phi > 0 \]
when $M\gg N$.

\textbf{Claim}: For sufficiently large $m$, we may assume
\[
A_{i}+B_{i}+C_{i}+D_{i}-E_{i} \geq 0, \quad \forall i=1, \ldots, n.
\]
This leads to
\[\begin{aligned}
0\geq &\; \Big(\frac{\varphi'}{m C} - C(K)\Big)\lambda_1  - C (\varphi'- \phi'+ \phi),
\end{aligned}\]
which finally implies an upper bound of $\lambda_1$.

We now prove the claim to finish the proof of Theorem \ref{thm1}.

\begin{lemma}
\label{lemm-1}
For sufficiently large $m$, the following estimates hold:
\begin{equation}
\label{lemm-11}
P_m^2 (B_1+ C_1 + D_1 - E_1 ) \geq P_m \kappa_{1}^{m-2} \sum_{j\neq 1} Q^{j\bar j} | D_1 g_{j\bar j}|^2
- Q^{1\bar 1} \kappa_1^{2m-2} |D_1g_{1\bar 1}|^2,
\end{equation}
and for any index $i\neq 1$,
\begin{equation}
\label{lemm-12}
P_m^2 (B_i+ C_i + D_i - E_i ) \geq 0.
\end{equation}
\end{lemma}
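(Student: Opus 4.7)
The idea is to expand $P_m^2 E_i = mQ^{i\bar i}|X|^2$ with $X=\sum_p\kappa_p^{m-1}D_ig_{p\bar p}$ into its diagonal part $mQ^{i\bar i}\sum_p\kappa_p^{2m-2}|D_ig_{p\bar p}|^2$ plus an off-diagonal remainder, and to absorb each piece into $P_m^2(B_i+C_i+D_i)$ term by term. The structural fact that organizes everything is $\sigma_s^{p\bar p,p\bar p}=0$: both $B_i$ and $D_i$ vanish at $p=i$, so the $p=i$ diagonal contribution of $E_i$ must be absorbed by $C_i$ alone. Precisely this is what forces the distinction between the two cases of the lemma.

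For the diagonal analysis, the $p=i$ coefficient comparison yields $Q^{i\bar i}\kappa_i^{m-2}((m-1)P_m-m\kappa_i^m)|D_ig_{i\bar i}|^2$. When $i\neq 1$ one has $P_m\geq\kappa_1^m+\kappa_i^m\geq 2\kappa_i^m$, so this is $\geq(m-2)Q^{i\bar i}\kappa_i^{2m-2}\geq 0$ for $m\geq 2$, which closes \eqref{lemm-12} on the diagonal; when $i=1$ only $(m-1)P_m-m\kappa_1^m\geq -\kappa_1^m$ holds (with equality when all other $\kappa_j$ vanish), and this is exactly the residual $-Q^{1\bar 1}\kappa_1^{2m-2}|D_1g_{1\bar 1}|^2$ in \eqref{lemm-11}. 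For the $p\neq i$ diagonal entries I use the elementary bound $\tfrac{\kappa_p^{m-1}-\kappa_i^{m-1}}{\kappa_p-\kappa_i}\geq\kappa_1^{m-2}$ (the top term of the geometric expansion), so $P_m^2 D_i\geq P_m\kappa_1^{m-2}\sum_{p\neq i}Q^{p\bar p}|D_ig_{p\bar p}|^2$; combined with the $j=p$ summand of $C_i$ and with $Q^{p\bar p}\geq Q^{i\bar i}$ from Lemma \ref{sigmak}(5) (valid whenever $\kappa_p\leq\kappa_i$, and in particular for every $p\neq 1$ when $i=1$), these absorb the remaining diagonal of $E_i$ and leave behind exactly the positive remainder $P_m\kappa_1^{m-2}\sum_{j\neq 1}Q^{j\bar j}|D_1g_{j\bar j}|^2$ of \eqref{lemm-11}.

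The off-diagonal cross-terms $mQ^{i\bar i}\sum_{p\neq q}\kappa_p^{m-1}\kappa_q^{m-1}D_ig_{p\bar p}\overline{D_ig_{q\bar q}}$ are the main obstacle, since they carry no sign and must be absorbed into $B_i,C_i,D_i$ with essentially no constant to spare. My plan is a weighted Cauchy--Schwarz of the form $|X|^2\leq\bigl(\sum_p\kappa_p^m/Q^{p\bar p}\bigr)\bigl(\sum_p\kappa_p^{m-2}Q^{p\bar p}|D_ig_{p\bar p}|^2\bigr)$, using Lemma \ref{sigmak}(5) to bound the first factor by $P_m/Q^{i\bar i}$ on the indices with $\kappa_p\leq\kappa_i$ and exploiting $B_i\geq 0$ (which is valid since $Q^{p\bar p,i\bar i}\geq 0$ under $\lambda\in\Gamma_{k+1}$ by Lemma \ref{sigmak}(4)) to absorb whatever mismatch remains. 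The bookkeeping is the delicate point: the $(m-1)$-fold slack in $C_i$ together with the $(m-1)$-term expansion in the coefficient of $D_i$ must jointly dominate the Cauchy--Schwarz overshoot, which is bounded independently of $m$; taking $m$ sufficiently large supplies the needed headroom, as is anticipated in the statement.
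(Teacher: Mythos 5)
Your structural picture of the diagonal is correct and matches the paper: the $p=i$ coefficient comparison producing $(m-1)P_m - m\kappa_i^m$, the observation that $P_m\geq 2\kappa_i^m$ for $i\neq 1$ (giving \eqref{lemm-12} on the diagonal), and the residual $-Q^{1\bar 1}\kappa_1^{2m-2}|D_1 g_{1\bar 1}|^2$ for $i=1$ are all as in the paper's proof. But two of your steps do not hold as stated. First, the bound
\[
\frac{\kappa_p^{m-1}-\kappa_i^{m-1}}{\kappa_p-\kappa_i}=\sum_{l=0}^{m-2}\kappa_p^{m-2-l}\kappa_i^l\geq\kappa_1^{m-2}
\]
is only true if one of $p,i$ equals $1$; if $i,p\neq 1$ (the relevant situation for \eqref{lemm-12}) every summand is bounded by $\max(\kappa_p,\kappa_i)^{m-2}$, which can be far below $\kappa_1^{m-2}$, so the line "$P_m^2 D_i\geq P_m\kappa_1^{m-2}\sum_{p\neq i}Q^{p\bar p}|D_ig_{p\bar p}|^2$" fails for $i\neq 1$. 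Second, your single weighted Cauchy--Schwarz $|X|^2\leq\bigl(\sum_p\kappa_p^m/Q^{p\bar p}\bigr)\bigl(\sum_p\kappa_p^{m-2}Q^{p\bar p}|D_ig_{p\bar p}|^2\bigr)$ cannot be closed: for $i\neq 1$ there is always an index $p$ (namely $p=1$) with $\kappa_p>\kappa_i$ and hence $Q^{p\bar p}<Q^{i\bar i}$, so the first factor cannot be bounded by $P_m/Q^{i\bar i}$, and even in the best case the resulting coefficient $m$ exactly matches the $(m-1)$ in $C_i$, leaving no slack. Contrary to your last sentence, the overshoot is not "bounded independently of $m$" and is not absorbed by taking $m$ large.

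The paper's proof resolves the off-diagonal obstacle with a more refined splitting than a single Cauchy--Schwarz: it separates the cross terms of $|X|^2$ into the pairs $p,q\neq i$ (estimated symmetrically as $\sum_{p\neq i}\sum_{q\neq p,i}\kappa_p^{m-2}\kappa_q^m|D_ig_{p\bar p}|^2$, which then cancels telescopically via $mP_m-m\kappa_j^m-m\sum_{q\neq j,i}\kappa_q^m=m\kappa_i^m$) and the $j=i$ vs $j\neq i$ cross terms (handled by a separate weighted Cauchy--Schwarz with the optimal split $\frac{3m-3}{2}$ vs $m-2$). The remaining deficit is then covered by showing
\[
P_m\,Q^{j\bar j}\sum_{l=1}^{m-3}\kappa_i^{m-2-l}\kappa_j^l\ \geq\ \frac{m-3}{2}\,Q^{i\bar i}\,\kappa_i^m\kappa_j^{m-2},
\]
and establishing this inequality is the genuine crux: it requires a case analysis on the relative sizes of $\lambda_i$ and $\lambda_j$ and of $\lambda_i$ versus $K_0$, in which the hypothesis $\lambda\in\Gamma_{k+1}$ is used more deeply than merely to get $B_i\geq 0$ (it gives $\sigma_{s-1}(\lambda|ij)>0$, allowing the replacement of $\lambda_i$ by $\lambda_j$ inside $Q^{j\bar j}$), and in which one must also take $\kappa_1$ sufficiently large (not $m$). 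None of this appears in your proposal, so the off-diagonal part of both \eqref{lemm-11} and \eqref{lemm-12} is a real gap.
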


\begin{proof}
For any index $i$, we compute that
\[
P_m (B_i + D_i) =  \sum_{j\neq i}  Q^{j\bar j,i\bar i} \kappa_j^{m-1} |D_i g_{j\bar j}|^2+
  \sum_{j \neq i } Q^{j\bar j} \sum_{l=0}^{m-2} \kappa_i^{m-2-l}\kappa_j^l |D_i g_{j\bar j}|^2.
\]
Note that $\kappa_j  Q^{j\bar j,i\bar i} + Q^{j\bar j} \geq  Q^{i\bar i}$.
To see this, we compute that
\[\begin{aligned}
\kappa_j  \sigma_s^{j\bar j,i\bar i} + \sigma_s^{j\bar j}
& = \lambda_j \sigma_{s-2} (\lambda | ij) + K_0 \sigma_{s-2} (\lambda | ij) + \sigma_{s-1} (\lambda | j)\\
& = \sigma_{s-1} (\lambda | i) - \sigma_{s-1} (\lambda | ij) + K_0 \sigma_{s-2} (\lambda | ij) + \sigma_{s-1} (\lambda | j)\\
& = \sigma_{s-1} (\lambda | i) + ( \lambda_i + K_0 ) \sigma_{s-2} (\lambda | ij) \\
& \geq \sigma_{s-1} (\lambda | i),
\end{aligned}\]
for every $1\leq s\leq k$.
We therefore have
\[
P_m (B_i + D_i)\geq  Q^{i\bar i} \sum_{j\neq i}  \kappa_j^{m-2 } | D_i g_{j\bar j}|^2
+ \sum_{j \neq i } Q^{j\bar j} \sum_{l=0}^{m-3} \kappa_i^{m-2-l}\kappa_j^l | D_i g_{j\bar j}|^2.
\]
It follows that
\[\begin{aligned}
P_m(B_i + C_i + D_i) \geq &\; m Q^{i\bar i}\sum_{j\neq i} \kappa_j^{m-2} | D_i g_{j\bar j} |^2
+ (m-1) Q^{i\bar i}\kappa_i^{m-2} | D_i g_{i\bar i}|^2\\
&\; + \sum_{j \neq i } Q^{j\bar j} \sum_{l=0}^{m-3} \kappa_i^{m-2-l} \kappa_j^l | D_i g_{j\bar j}| ^2
\end{aligned}\]
For $E_i$, we have
\[\begin{aligned}
P_m^2 E_i = &\; m Q^{i\bar i} \sum_{j\neq i} \kappa_j^{2m-2} | D_i g_{j\bar j}|^2
+ m Q^{i\bar i} \kappa_i^{2m-2} | D_i g_{i\bar i}|^2\\
&\; + m Q^{i\bar i} \sum_{p\neq i}\sum_{q\neq p, i} \kappa_p^{m-1} \kappa_q^{m-1} D_i g_{p\bar p} \ol{ D_i g_{q\bar q} }\\
&\; + 2 m Q^{i\bar i} \operatorname{Re}  \sum_{j\neq  i} \kappa_i^{m-1} D_i g_{i\bar i} \kappa_j^{m-1}  \ol{ D_i g_{j\bar j} }.
\end{aligned}\]
By Cauchy-Schwarz inequality, we know
\[
\begin{aligned}
& \sum_{p\neq i}\sum_{q\neq p, i} \kappa_p^{m-1} \kappa_q^{m-1} D_i g_{p\bar p} \ov{ D_i g_{q\bar q} }\\
\leq &\; \sum_{p\neq i}\sum_{q\neq p, i} \frac{1}{2}(\kappa_p^{m-2} \kappa_q^m | D_i g_{p\bar p}| ^2
+ \kappa_q^{m-2} \kappa_p^m |D_i g_{q\bar q}|^2 ).
\end{aligned}
\]
By the symmetry of $p$ and $q$ in the above inequality, we obtain
\[
\sum_{p\neq i}\sum_{q\neq p, i} \kappa_p^{m-1} \kappa_q^{m-1} D_i g_{p\bar p}  \ov{D_i g_{q\bar q} } \leq
\sum_{p\neq i}\sum_{q\neq p, i} \kappa_p^{m-2} \kappa_q^m | D_i g_{p\bar p} |^2.
\]
Therefore,
\begin{equation}
\label{BCDE}
\begin{aligned}
& P_m^2(B_i + C_i + D_i - E_i)\\
\geq &\; Q^{i\bar i}\sum_{j\neq i} [m P_m - m\kappa_j^m ]\kappa_j^{m-2} | D_i g_{j\bar j}|^2
+ P_m \sum_{j \neq i } Q^{j\bar j} \sum_{l=0}^{m-3} \kappa_i^{m-2-l} \kappa_j^l | D_i g_{j\bar j}|^2\\
&\; + [(m-1) P_m - m \kappa_i^m ] Q^{i\bar i}\kappa_i^{m-2} | D_i g_{i\bar i}|^2
- m Q^{i\bar i}\sum_{j\neq i}\sum_{q\neq j, i} \kappa_j^{m-2} \kappa_q^m | D_i g_{j\bar j} |^2 \\
&\; - 2 m Q^{i\bar i} \operatorname{Re }  \sum_{j\neq  i} \kappa_i^{m-1} D_i g_{i\bar i} \kappa_j^{m-1}   \ol{ D_i g_{j\bar j} }.
\end{aligned}
\end{equation}
By calculating, we see
$m P_m - m \kappa_j^m - m \sum_{q\neq j, i}  \kappa_q^m =  m \kappa_i^{m}$,
and we arrive at
\begin{equation}
\label{BCDE-1}
\begin{aligned}
P_m^2&\;(B_i + C_i + D_i - E_i)\\
\geq &\; m Q^{i\bar i}\sum_{j\neq i}  \kappa_i^m \kappa_j^{m-2} | D_i g_{j\bar j}|^2
+ [(m-1) P_m - m \kappa_i^m ] Q^{i\bar i} \kappa_i^{m-2} | D_i g_{i\bar i}|^2\\
 + &\; P_m \sum_{j \neq i } Q^{j\bar j} \sum_{l=0}^{m-3} \kappa_i^{m-2-l} \kappa_j^l | D_i g_{j\bar j} |^2
 - 2 m Q^{i\bar i} \operatorname{Re}  \sum_{j\neq  i}  \kappa_i^{m-1} D_i g_{i\bar i} \kappa_j^{m-1}  \ol{ D_i g_{j\bar j} }.
\end{aligned}
\end{equation}
We now estimate the third term in the right hand side of the above inequality.

Case A: $\lambda_i \geq \lambda_j$. Then $\kappa_i\geq \kappa_j$ and $Q^{j\bar j}\geq Q^{i\bar i}$. Hence
\[
 P_m  Q^{j\bar j} \sum_{l=1}^{m-3} \kappa_i^{m-2-l}\kappa_j^l \geq \kappa_i^m  Q^{i\bar i} \sum_{l=1}^{m-3} \kappa_i^{m-2-l}\kappa_j^l
 \geq (m-3) Q^{i\bar i} \kappa_i^m  \kappa_j^{m-2}.
\]

Case B: $\lambda_i \leq \lambda_j$. We further divide into two subcases.
If $\lambda_i \geq  K_0$,
\[\begin{aligned}
 P_m  Q^{j\bar j} \sum_{l=1}^{m-3} \kappa_i^{m-2-l}\kappa_j^l
 \geq &\;  \kappa_1^m \sum_{s=1}^{k} \alpha_s (\lambda_i \sigma_s^{i\bar i,j\bar j} + \sigma_{s-1}(\lambda| ij) ) \sum_{l=1}^{m-3} \kappa_i^{m-2-l}\kappa_j^{l}\\
 \geq &\;  \frac{1}{2}\kappa_1^m  \sum_{s=1}^{k} \alpha_s ( \kappa_i \sigma_s^{i\bar i,j\bar j} + 2 \sigma_{s-1}(\lambda| ij) ) \sum_{l=1}^{m-3} \kappa_i^{m-2-l}\kappa_j^{l}\\
 \geq &\;  \frac{1}{2}\kappa_1^m  \sum_{s=1}^{k} \alpha_s ( \kappa_j \sigma_s^{i\bar i,j\bar j} + 2 \sigma_{s-1}(\lambda| ij) ) \sum_{l=1}^{m-3} \kappa_i^{m-1-l}\kappa_j^{l-1}\\
 \geq &\;  \frac{1}{2}\kappa_1^m  \sum_{s=1}^{k} \alpha_s (\lambda_j \sigma_s^{i\bar i,j\bar j} + \sigma_{s-1}(\lambda| ij) ) \sum_{l=1}^{m-3} \kappa_i^{m-1-l}\kappa_j^{l-1}\\
 \geq &\; \frac{1}{2}(m-3) Q^{i\bar i} \kappa_i^m  \kappa_j^{m-2},
\end{aligned}\]
where we used $\sigma_{s-1} (\lambda |ij) > 0$ for $1 \leq s \leq k$ by our assumption $\lambda \in \Gamma_{k+1}$ in the third inequality.
If $\lambda_i \leq K_0$, for $k\leq l \leq [\frac{m-3}{2}]$, since $\lambda_1 \sigma_{s-1} (\lambda | 1) \geq \frac{s}{n} \sigma_s (\lambda)$,
we know
\[
\kappa_1^{l+1} Q^{j\bar j} = \kappa_1^{l+1} \sum_{s=1}^k \alpha_s \sigma_s^{j\bar j}
\geq  \frac{\kappa_1^l}{n} \sum_{s=1}^k \alpha_s  \sigma_s \geq \frac{\inf \psi }{n} \kappa_1^l,
\]
from which we obtain that
$\kappa_1^{l+1} Q^{j\bar j} \geq Q^{i\bar i}$ when $\kappa_1$ is sufficiently large
since $Q^{i\bar i} \leq C \lambda_1^{k-1}$.
We then obtain
\[\begin{aligned}
 P_m  Q^{j\bar j} \sum_{l=1}^{m-3} \kappa_i^{m-2-l}\kappa_j^l
\geq \sum_{l=k}^{m-3}  Q^{i\bar i} \kappa_1^{m-l-1}  \kappa_i^{m-2-l} \kappa_j^{l}
\geq  \frac{m-3}{2} Q^{i\bar i}  \kappa_i^m  \kappa_j^{m-2}
\end{aligned}\]
where in the last inequality we used $\kappa_1 \kappa_i^{m-2-l} \geq \kappa_i^{m}$ when $\kappa_1$ is sufficiently large.

Combining both cases, we have
\[
P_m Q^{j\bar j} \sum_{l=0}^{m-3} \kappa_i^{m-2-l}\kappa_j^l | D_i g_{j\bar j} |^2
\geq  \Big( \frac{m-3}{2}  Q^{i\bar i} \kappa_i^m  \kappa_j^{m-2}
+  P_m  Q^{j\bar j} \kappa_i^{m-2} \Big) | D_i g_{j\bar j} |^2.
\]
Using Cauchy-Schwarz inequality again, we see
\[
2 m \kappa_i^{m-1} D_i g_{i\bar i}  \kappa_j^{m-1}  \ol{ D_i g_{j\bar j} }
\leq \frac{3m-3}{2} \kappa_i^m \kappa_j^{m-2} | D_i g_{j\bar j} |^2 + (m-2) \kappa_i^{m-2} \kappa_j^{m} | D_i g_{i\bar i} |^2,
\]
where we also used $m^2 \leq \frac{3m-3 }{2}(m-2)$ when $m$ is sufficiently large.
Substituting the above two inequalities into \eqref{BCDE-1},
we arrive at
\begin{equation}
\label{BCDE-3}
\begin{aligned}
P_m^2&\; (B_i + C_i + D_i - E_i)\\
\geq &\; P_m  \kappa_i^{m-2}\sum_{j\neq i} Q^{j\bar j}  | D_i g_{j\bar j} |^2
- (m-2) Q^{i\bar i} \sum_{j\neq i} \kappa_i^{m-2} \kappa_j^{m} | D_i g_{i\bar i} |^2\\
& + [(m-1) P_m - m \kappa_i^m ] Q^{i\bar i}\kappa_i^{m-2} | D_i g_{i\bar i} |^2\\
\geq &\; [(m-1) P_m - m \kappa_i^m - (m-2) (P_m-\kappa_i^m)] Q^{i\bar i}\kappa_i^{m-2} | D_i g_{i\bar i} |^2 \geq 0,
\end{aligned}
\end{equation}
where we used $i\neq 1$ in the last inequality.
For the case $i=1$, by the first inequality of \eqref{BCDE-3}, we have
\[
\begin{aligned}
P_m^2&\;(B_1 + C_1 + D_1 - E_1)\\
\geq &\; P_m  \kappa_1^{m-2}\sum_{j\neq 1} Q^{j\bar j}  | D_1 g_{j\bar j} |^2
- \kappa_1^m  Q^{1\bar 1}\kappa_1^{m-2} | D_1 g_{1\bar 1} |^2.\\
\end{aligned}
\]
\end{proof}

\begin{lemma}
\label{lemm-2}
Suppose there exists $0<\delta \leq 1$ such that $\lambda_\mu\geq \delta \lambda_1$ for some $1 \leq \mu \leq k-1$.
There exists a sufficiently small positive constant $\delta'$ such that if $\lambda_{\mu+1} \leq \delta' \lambda_1$,
then
\[
A_1 + B_1 + C_1 + D_1 - E_1 \geq 0.
\]
\end{lemma}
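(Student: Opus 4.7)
The starting point is Lemma \ref{lemm-1}, which already gives
\[
P_m^2(B_1+C_1+D_1-E_1) \geq P_m\kappa_1^{m-2}\sum_{j\neq 1}Q^{j\bar j}|D_1 g_{j\bar j}|^2 - Q^{1\bar 1}\kappa_1^{2m-2}|D_1 g_{1\bar 1}|^2,
\]
so the whole task reduces to showing that $A_1$, together with the remaining positive remainder above, dominates the single bad term $Q^{1\bar 1}\kappa_1^{2m-2}|D_1 g_{1\bar 1}|^2$. The gap hypothesis---a block $\lambda_1,\ldots,\lambda_\mu\geq\delta\lambda_1$ of at most $k-1$ large eigenvalues, separated from $\lambda_{\mu+1}\leq\delta'\lambda_1$---is exactly what unlocks the quotient concavity at the sharper level $\ell=\mu$ rather than $\ell=1$, which is the information that the naive definition of $A_j$ does not exploit.

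First I would apply Lemma \ref{GRW} with $\ell=\mu$, $\beta=1/(k-\mu)$, and a small $\theta>0$ (permitted since $\mu\leq k-1$) to replace the bare Newton bound $K|D_1 Q|^2$ used to define $A_1$ by the sharper inequality
\[
-Q^{p\bar p,q\bar q}D_1 g_{p\bar p}D_{\bar 1}g_{q\bar q}+\Bigl(1-\beta+\tfrac{\beta}{\theta}\Bigr)\tfrac{|D_1 Q|^2}{Q} \geq Q(\beta+1-\beta\theta)\tfrac{|D_1 S_\mu|^2}{S_\mu^2} - \tfrac{Q}{S_\mu}S_\mu^{p\bar p,q\bar q}D_1 g_{p\bar p}D_{\bar 1}g_{q\bar q},
\]
introducing a new positive term $Q|D_1 S_\mu|^2/S_\mu^2$ at the price of the residue $-(Q/S_\mu)S_\mu^{p\bar p,q\bar q}D_1 g_{p\bar p}D_{\bar 1}g_{q\bar q}$.

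Next I would split indices into the big block $I_B=\{1,\ldots,\mu\}$ and the small block $I_S=\{\mu+1,\ldots,n\}$ and estimate block by block using Lemma \ref{sigmak}. Under the gap one has $S_\mu\sim\lambda_1\cdots\lambda_\mu$ and $Q^{j\bar j}\geq c\lambda_1\cdots\lambda_\mu/\lambda_j$ for $j\in I_S$, while every entry $\sigma_{s-2}(\lambda|pq)$ appearing in $S_\mu^{p\bar p,q\bar q}$ with $p\in I_B$ or $q\in I_B$ carries a multiplicative factor bounded by $\delta'/\delta$. Hence the part of the residue touching $I_B$ is $O(\delta'/\delta)$-small and absorbable, while the $I_S$-$I_S$ part is dominated via Cauchy-Schwarz by the good term $Q|D_1 S_\mu|^2/S_\mu^2$, using that $D_1 S_\mu$ is essentially $\sum_{j\in I_B}\sigma_{\mu-1}(\lambda|j)D_1 g_{j\bar j}$ plus lower order.

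Finally, I would use the critical equation $\sum_p Q^{p\bar p}D_1 g_{p\bar p}=D_1\psi$ and isolate the $p=1$ summand via Cauchy-Schwarz; the $K|D_1 Q|^2$ contribution inside the enhanced $A_1$ then kills $Q^{1\bar 1}\kappa_1^{2m-2}|D_1 g_{1\bar 1}|^2$ up to a residual in $\sum_{p\neq 1}Q^{p\bar p}|D_1 g_{p\bar p}|^2$, which is absorbed by the good remainder $P_m\kappa_1^{m-2}\sum_{j\neq 1}Q^{j\bar j}|D_1 g_{j\bar j}|^2$ from Lemma \ref{lemm-1}. The main obstacle is the tuning of $\delta'$ as a function of $\delta$, $\theta$, $K$, $\mu$, and $\inf\psi$: it must be small enough that every block-cross term from the expansion of $(Q/S_\mu)S_\mu^{p\bar p,q\bar q}$ is absorbed, yet one cannot drive $\theta$ too far down without forcing $K$ through the roof and breaking $A_j\geq 0$ globally.
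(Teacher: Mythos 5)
Your proposal correctly identifies the right scaffolding---start from Lemma~\ref{lemm-1}, upgrade the quotient concavity from level $\ell=1$ to level $\ell=\mu$ via Lemma~\ref{GRW}, then split the cross-term residue $F^{pq}=S_\mu^{p\bar p}S_\mu^{q\bar q}-S_\mu S_\mu^{p\bar p,q\bar q}$ into big/small blocks and tune $\delta'$---and this matches the paper's overall strategy. However, the final cancellation step is wrong, and it is precisely the step that makes the lemma work.

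You claim the $K|D_1Q|^2$ term, combined with Cauchy--Schwarz on the differentiated equation $\sum_p Q^{p\bar p}D_1 g_{p\bar p}=D_1\psi$, kills the bad piece $Q^{1\bar 1}\kappa_1^{2m-2}|D_1 g_{1\bar 1}|^2$ coming from $E_1$. This cannot work. First, $D_1Q=D_1\psi$, so $|D_1Q|^2\leq C(1+|D_1Du|^2)$ is a lower-order quantity; after the prefactor $\kappa_1^{m-1}/P_m\sim\kappa_1^{-1}$ it is $O(\kappa_1^{-1})$-bounded and cannot dominate anything of the size of the $E_1$ piece. Second, if you instead try to eliminate $D_1 g_{1\bar 1}$ by solving $Q^{1\bar 1}D_1g_{1\bar 1}=D_1\psi-\sum_{p\neq 1}Q^{p\bar p}D_1g_{p\bar p}$, the residual you produce is $\frac{\kappa_1^{2m-2}}{Q^{1\bar 1}}\sum_{p\neq 1}(Q^{p\bar p})^2|D_1 g_{p\bar p}|^2$, and the good remainder $P_m\kappa_1^{m-2}Q^{p\bar p}|D_1 g_{p\bar p}|^2$ from Lemma~\ref{lemm-1} absorbs it only if $Q^{p\bar p}/Q^{1\bar 1}\leq C$, which fails in $\Gamma_{k+1}$ (e.g.\ $Q^{p\bar p}/Q^{1\bar 1}\sim\lambda_1/\lambda_k$ for $Q=\sigma_k$ and $p>k$). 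This divergence is exactly the obstruction that quotient concavity is introduced to bypass; reverting to the raw differentiated equation reintroduces it.

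The mechanism that actually does the job is the positive GRW term $Q|D_1 S_\mu|^2/S_\mu^2$, which you only use to absorb residues but not to beat $E_1$. Its diagonal piece $\frac{Q}{S_\mu^2}(S_\mu^{1\bar 1})^2|D_1 g_{1\bar 1}|^2$, after multiplying by $P_m\kappa_1^{m-1}$, satisfies $\kappa_1^2\frac{Q(S_\mu^{1\bar 1})^2}{S_\mu^2}\geq\big(1-C\delta'-C/\lambda_1\big)^2\,Q\geq\big(1-C\delta'-C/\lambda_1\big)^2\lambda_1 Q^{1\bar 1}$, where $Q\geq\lambda_1 Q^{1\bar 1}$ is the $\Gamma_{k+1}$ inequality. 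Together with the strict margin $P_m\geq(1+\delta^m)\kappa_1^m$ (which is where the hypothesis $\lambda_\mu\geq\delta\lambda_1$ is used) and the GRW coefficient $(1+\beta/2)$ or $(1+\beta/4)$, this dominates $\kappa_1^{2m-2}Q^{1\bar 1}|D_1 g_{1\bar 1}|^2$ once $\delta'$, $\tau$ are small and $\lambda_1$, $m$ are large. Your proof omits this comparison and so has no substitute for the decisive cancellation; as written it does not close.
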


\begin{remark}
With the above hypothesis,
if for some $1 \leq s \leq \mu$, $\alpha_s \neq 0$ in $Q$, we have from the equation that
\[
\psi \geq \alpha_s \sigma_s \geq \alpha_s \lambda_1 \cdots \lambda_s \geq \alpha_s \delta^{s -1} \lambda_1^s
\]
from which we can obtain a upper bound for $\lambda_1$. Henceforth, we may assume in the following proof that
$\alpha_1 = \alpha_2 = \cdots = \alpha_\mu = 0$ and $Q = \sum_{s= \mu+1}^k \alpha_s \sigma_s$.
\end{remark}

\begin{proof}
By our assumption, we can always assume that $\lambda_\mu > 1$, otherwise we can obtain an upper bound for $\lambda_1$.
By Lemma \ref{lemm-1},  we see
\begin{equation}
\label{ABCDE'}\begin{aligned}
P_m^2 &\; (A_1 + B_1 + C_1 + D_1 - E_1) \\
\geq &\; P_m^2 A_1 + P_m  \kappa_1^{m-2}\sum_{j\neq 1} Q^{j\bar j}  | D_1 g_{j\bar j} |^2
- \kappa_1^m  Q^{1\bar 1}\kappa_1^{m-2} | D_1 g_{1\bar 1} |^2.
\end{aligned}\end{equation}
Choosing $\theta = \frac{1}{2}$ in Lemma \ref{GRW}, we have for $\mu<k$
\begin{equation}
\label{ABCDE-1}
\begin{aligned}
P_m^2 A_1
\geq &\; \frac{P_m \kappa_1^{m-1} Q}{S_\mu^2}
\Big(\big(1+\frac{\beta}{2}\big) |\sum_j S_\mu^{j\bar j} D_1 g_{j\bar j} |^2
- S_\mu S_\mu^{p\bar p,q \bar q} D_1 g_{p\bar p} D_{\bar 1} g_{q\bar q} \Big)\\
\geq &\; \frac{P_m \kappa_1^{m-1} Q}{S_\mu^2}  \Big(\big(1+\frac{\beta}{2}\big) \sum_j|S_\mu^{j\bar j} D_1 g_{j\bar j}|^2
+\frac{\beta}{2} \sum_{p\neq q} S_\mu^{p\bar p}S_\mu^{q\bar q} D_1 g_{p\bar p} D_{\bar 1} g_{q\bar q} \\
 &\; + \sum_{p\neq q} (S_\mu^{p\bar p}S_\mu^{q\bar q}
 - S_\mu S_\mu^{p\bar p,q\bar q} ) D_1 g_{p\bar p}D_{\bar 1} g_{q\bar q}\Big).\\
 \end{aligned}
\end{equation}

We now estimate $P_m A_1^2$ case by case. For $\mu=1$,
it is easy to see
\[\begin{aligned}
\big(1+\frac{\beta}{2}\big) |\sum_j D_1 g_{j\bar j}|^2
\geq &\;  \big(1+\frac{\beta}{2}\big) \Big[ 2 \operatorname{Re} \sum_{a\neq 1} D_1 g_{a\bar a} \overline{ D_1 g_{1\bar 1} }
 +  | D_1 g_{1\bar 1} |^2 \Big]\\
 \geq &\; \big(1+\frac{\beta}{4}\big)  | D_1 g_{1\bar 1} |^2 - C_\beta \sum_{a\neq 1} | D_1 g_{a\bar a} |^2.
\end{aligned}\]
Then, using the fact $Q \geq \lambda_1 Q^{1\bar 1}$ for $\lambda \in \Gamma_{k+1}$,
we can derive from the first inequality of \eqref{ABCDE-1} that
\begin{equation}
\label{mu1}
\begin{aligned}
P_m^2 A_1\geq &\; \big(1+\frac{\beta}{4}\big) \frac{P_m \kappa_1^{m-1} Q}{S_1^2}   | D_1 g_{1\bar 1} |^2
- C_\beta  \frac{P_m \kappa_1^{m-1} Q}{S_1^2}  \sum_{a\neq 1} | D_1 g_{a\bar a} |^2\\
\geq &\; \frac{ \big(1+\frac{\beta}{4}\big) P_m \kappa_1^{m-2} Q^{1\bar 1}}{(1+ \sum_{j\neq 1}\lambda_j/\lambda_1 + \beta_0^1/ \lambda_1)^2}
| D_1 g_{1\bar 1}|^2
- C_\beta  \frac{P_m \kappa_1^{m-1} Q}{S_1^2}  \sum_{a\neq 1} | D_1 g_{a\bar a} |^2\\
\geq &\; P_m \kappa_1^{m-2} Q^{1\bar 1} | D_1 g_{1\bar 1} |^2
- C_\beta  \frac{P_m \kappa_1^{m-1} Q}{S_1^2}  \sum_{a\neq 1} | D_1 g_{a\bar a} |^2,
\end{aligned}
\end{equation}
where in the last inequality we
used $1+\frac{\beta}{4} \geq (1+ (n-1)\delta' + \beta_0^1 / \lambda_1)^2$ for sufficiently small positive $\delta'$
and sufficiently large $\lambda_1$.
For $\mu \geq 2$,
we have from Lemma \ref{GRW} with $\theta = 1$ that
\begin{equation}
\label{ABCDE-1'}
\begin{aligned}
P_m^2 A_1 \geq &\;  \frac{P_m \kappa_1^{m-1} Q}{S_\mu^2}  \Big( \sum_j|S_\mu^{j\bar j} D_1 g_{j\bar j}|^2
-\sum_{p\neq q} |F^{pq} D_1 g_{p\bar p} D_{\bar 1} g_{q\bar q}|\Big),
\end{aligned}
\end{equation}
where we used the notation $F^{pq} = S_\mu^{p\bar p} S_\mu^{q\bar q}- S_\mu S_\mu^{p\bar p,q\bar q}$.

We compute $F^{pq}$. Recall $S_\mu = \sigma_\mu + \sum_{s=0}^{\mu - 1} \beta_s^{\mu} \sigma_s$,
where $\beta^\mu_s \geq 0$ are constants depending on $s$ and $\mu$.
As in Section 2, we can similarly define $S_\mu^{a\bar a}$ and $S_\mu^{a\bar a, b\bar b}$.
Define
\[
S_{\mu-1} (\lambda | a) := \sigma_{\mu-1} (\lambda | a) + \sum_{s=0}^{\mu-1} \beta_s^\mu \sigma_{s-1} (\lambda | a),
\]
and similarly define $S_{\mu-1} (\lambda | ab)$ and $S_{\mu - 2} (\lambda | ab)$,
where we use notation that $\sigma_s = 0$ for $s < 0$ and $\sigma_0 = 1$.
Note that $S_{\mu-1} (\lambda | a) = S_\mu^{a\bar a}$ and $S_{\mu-2} (\lambda | ab) = S_\mu^{a\bar a, b\bar b}$.
By Lemma \ref{sigmak} (1), it is easy to show
\[\begin{aligned}
F^{pq}
= &\, S_{\mu- 1} (\lambda | pq)^2 - S_\mu (\lambda | pq) S_{\mu -2} (\lambda | pq).
\end{aligned}\]

We now estimate $F^{pq}$.
By the first result of Lemma \ref{sigmak-1}, we see
\begin{equation}
\label{S}
S_\mu^{a\bar a} \geq \frac{\lambda_1 \cdots \lambda_\mu }{\lambda_a} \; \mbox{if} \; a \leq \mu \; \mbox{and} \;
S_\mu^{a\bar a} \geq \lambda_1 \cdots \lambda_{\mu - 1} \; \mbox{if} \; a > \mu.
\end{equation}
For $a \leq \mu$ and $b \leq \mu$, by Lemma \ref{sigmak} (6),
we have
\[ \sigma_s (\lambda | ab) \leq C \frac{\lambda_1 \cdots \lambda_\mu}{\lambda_a \lambda_b},\;
\sigma_{\mu - 1} (\lambda | ab) \leq C \frac{\lambda_1 \cdots \lambda_{\mu+1} }{\lambda_a \lambda_b},\;
\sigma_{\mu} (\lambda | ab) \leq C \frac{\lambda_1 \cdots \lambda_{\mu+2} }{\lambda_a \lambda_b},\]
where $s\leq \mu - 2$.
Hence,
 we have
\[\begin{aligned}
S_{\mu - 1}(\lambda | ab) \leq  C \frac{1 + \lambda_{\mu + 1}}{\lambda_b} S_\mu^{a \bar a},\;\;
S_{\mu -  2} (\lambda | ab) \leq C \frac{\lambda_1 \cdots \lambda_{\mu}}{\lambda_a \lambda_b} \leq \frac{C}{\lambda_b} S_\mu^{a \bar a},
\end{aligned}\]
and
\[
S_{\mu} (\lambda | ab) \leq C \frac{1 + \lambda_{\mu + 1} + \lambda_{\mu + 1} \lambda_{\mu + 2} }{\lambda_b} S_\mu^{a \bar a}.
\]
Now, we can estimate that
\[\begin{aligned}
& \sum_{p,q\leq \mu} | F^{pq} D_1 g_{p\bar p} D_1 g_{q\bar q}|\\
\leq&\; C  \Big( \frac{(1+ \lambda_{\mu + 1})^2}{ \delta^2 \lambda_1^2}
+ \frac{ 1 + \lambda_{\mu + 1} + \lambda_{\mu + 1} \lambda_{\mu + 2} }{\delta^2 \lambda_1^2} \Big)
 \sum_{p\leq \mu} |S_\mu^{p\bar p} D_1 g_{p\bar p}|^2\\
 \leq &\; \frac{C}{\delta^2} \frac{(1 + \delta'\lambda_1 )^2 + 1 + \delta'\lambda_1 + \delta'^2 \lambda_1^2}{\lambda_1^2}
 \sum_{p\leq \mu} |S_\mu^{p\bar p} D_1 g_{p\bar p}|^2.
\end{aligned}\]
For any undetermined positive constant $\tau$, we obtain
\begin{equation}\label{Fpq-1}
\sum_{p,q\leq \mu} | F^{pq} D_1 g_{p\bar p} D_1 g_{q\bar q}| \leq \tau \sum_{p\leq \mu} |S_\mu^{p\bar p} D_1 g_{p\bar p}|^2
\end{equation}
as long as
$\delta' \leq \frac{\tau \delta^2}{8C} \; \mbox{and} \; \frac{1}{\lambda_1} \leq \delta \sqrt{\frac{\tau}{8C}}$.

For $a\leq \mu $ and $b > \mu$,
by \eqref{S},
we have that
\[
S_{\mu - 2} (\lambda | ab) \leq C \lambda_1 \cdots \lambda_{\mu - 2} \leq \frac{C}{\lambda_{\mu - 1}} S_\mu^{b\bar b}
\]
and
\[\begin{aligned}
S_{\mu - 1} (\lambda | ab) \leq C \frac{\lambda_1 \cdots \lambda_\mu}{\lambda_a} \leq C S_\mu^{a\bar a}
 \leq C S_\mu^{b\bar b}.\\
\end{aligned}\]
Note that $\sigma_s (\lambda | ab) \leq C \frac{\lambda_1 \cdots \lambda_\mu}{\lambda_a}$ for $s \leq \mu - 1$,
$a\leq \mu $ and $b > \mu$.
So we have
\[
S_\mu (\lambda | ab) \leq
C \Big( \frac{\lambda_1 \cdots \lambda_{\mu} }{\lambda_a} + \frac{\lambda_1 \cdots \lambda_{\mu + 1} }{\lambda_a} \Big)
\leq C (1 + \lambda_{\mu + 1} ) S_{\mu}^{a\bar a}.
\]
By the above calculations,
we have, for any positive constant $\tau$,
\begin{equation}\label{Fpq-2}\begin{aligned}
& \sum_{p\leq \mu, q > \mu} | F^{pq} D_1 g_{p\bar p} D_1 g_{q\bar q}|\\
\leq &\; \Big( \sum_{p\leq \mu, q > \mu}   S_{\mu- 1} (\lambda | pq)^2 + S_\mu (\lambda | pq) S_{\mu -2} (\lambda | pq) \Big)
 |D_1 g_{p\bar p}|| D_1 g_{q\bar q}|\\
\leq &\;  \tau \sum_{p\leq \mu} |S_\mu^{p\bar p} D_1 g_{p\bar p}|^2 + C_\tau \sum_{q> \mu} |S_\mu^{q\bar q} D_1 g_{q\bar q}|^2.
\end{aligned}\end{equation}

For $a > \mu$ and $b > \mu$, we have
\[\begin{aligned}
S_{\mu - i} (\lambda | ab) \leq  C \lambda_1 \cdots \lambda_{\mu - i},\; \mbox{where} \; i = 0, 1, 2.
\end{aligned}\]
So by \eqref{S} we obtain
\begin{equation}\label{Fpq-3}\begin{aligned}
& \sum_{p\neq q, p, q > \mu}  | F^{pq} D_1 g_{p\bar p} D_1 g_{q\bar q}| \leq C \sum_{q> \mu} |S_\mu^{q\bar q} D_1 g_{q\bar q}|^2.
\end{aligned}\end{equation}
Combining \eqref{Fpq-1}, \eqref{Fpq-2}and \eqref{Fpq-3}, we obtain for any positive constant $\tau$:
\begin{equation}
\label{Fpq-4}
\begin{aligned}
\sum_{p\neq q} |F^{pq} D_1 g_{p\bar p} D_1 g_{q\bar q}|
\leq &\;  2 \tau \sum_{p\leq \mu} | S_\mu^{p\bar p} D_1 g_{p\bar p}|^2 + C_\tau \sum_{q> \mu} |S_\mu^{q\bar q} D_1 g_{q\bar q}|^2.
\end{aligned}
\end{equation}

Substituting \eqref{Fpq-4} into \eqref{ABCDE-1'}, we have
\begin{equation}
\label{ABCDE-2}
\begin{aligned}
P_m^2 A_1
\geq &\;  \frac{P_m \kappa_1^{m-1} Q}{S_\mu^2}  \Big( (1-2\tau)|S_\mu^{1\bar 1} D_1 g_{1\bar 1}|^2
-C_\tau \sum_{q> \mu} |S_\mu^{q\bar q} D_1 g_{q\bar q}|^2\Big).
\end{aligned}
\end{equation}
Since $S_\mu \geq \lambda_1 \cdots \lambda_\mu$,
we see
\[
\frac{\kappa_1 S_\mu^{1\bar 1} }{S_\mu} \geq \frac{S_\mu - S_\mu (\lambda | 1)}{S_\mu}
\geq 1 - \frac{C \lambda_{\mu + 1}}{\lambda_1} - \frac{C}{\lambda_1},
\]
where we used $\sigma_\mu (\lambda | 1) \leq C \lambda_2 \cdots \lambda_{\mu + 1}$ and
$\sigma_s (\lambda | 1) \leq C \lambda_2 \cdots \lambda_\mu$  for $1 \leq s \leq \mu - 1$,
in the second inequality.
We can estimate the first term on the right hand side of \eqref{ABCDE-2} as below
\begin{equation}
\label{A-2}
\begin{aligned}
 &\;\frac{P_m \kappa_1^{m-1} Q}{S_\mu^2} (1-2\tau)|S_\mu^{1\bar 1} D_1 g_{1\bar 1}|^2 \\
 = &\; (1-2\tau) P_m \kappa_1^{m-2} \frac{Q}{\kappa_1} \Big(\frac{\kappa_1 S_\mu^{1\bar 1}}{S_\mu}\Big)^2
 |D_1 g_{1\bar 1}|^2\\
 \geq &\; (1-2\tau) (1 + \delta^m) \kappa_1^{2m-2} \frac{Q}{\kappa_1}
 \Big(1- C\frac{\lambda_{\mu+1}}{\lambda_1} - \frac{C}{\lambda_1} \Big)^2 |D_1 g_{1\bar 1}|^2\\
 \geq &\; (1-2\tau) (1-C\delta' - \frac{C}{\lambda_1})^2 (1 + \delta^m) \kappa_1^{2m-2} \frac{\lambda_1 Q^{1\bar 1}}{\kappa_1}
 |D_1 g_{1\bar 1}|^2,
\end{aligned}
\end{equation}
where in the last inequality we used $Q \geq \lambda_1 Q^{1\bar 1}$.
For $\delta'$ and $\tau$ small enough and $\lambda_1$ large enough, we obtain
\begin{equation}
\label{mu2}
\begin{aligned}
 P_m^2 A_1
 \geq  \kappa_1^{2m-2} Q^{1\bar 1} |D_1 g_{1\bar 1}|^2 - C_\tau \frac{P_m \kappa_1^{m-1} Q}{S_\mu^2}
 \sum_{q> \mu} |S_\mu^{q\bar q} D_1 g_{q\bar q}|^2.
\end{aligned}
\end{equation}

Substituting \eqref{mu1} or \eqref{mu2} into the inequality \eqref{ABCDE'}, we obtain
 \begin{equation}
\label{ABCDE4}\begin{aligned}
P_m^2 &\; (A_1 + B_1 + C_1 + D_1 - E_1) \\
\geq &\; P_m  \kappa_1^{m-2}\sum_{j>\mu} \Big( Q^{j\bar j}
- C_\tau \frac{\kappa_1 Q (S_\mu^{j\bar j})^2}{S_\mu^2} \Big)| D_1 g_{j\bar j}|^2.
\end{aligned}\end{equation}
Assume $\kappa_1 \leq 2\lambda_1$.
We show that
$Q^{j\bar j} - C_\tau \frac{\lambda_1 Q (S_\mu^{j\bar j})^2}{S_\mu^2} \geq 0 \;\mbox{for}\; j > \mu$.
For any $j>\mu$, since
$ S_\mu^{j \bar j} \leq C \lambda_1 \cdots \lambda_{\mu -1}$ and $S_\mu \geq \lambda_1 \cdots \lambda_\mu$,
we have
\[
C \frac{\lambda_1 (S_\mu^{j\bar j})^2}{S_\mu^2}
\leq C \frac{\lambda_1}{\lambda_\mu^2}
\leq \frac{C}{\lambda_1 \delta^2}.
\]
By the remark below Lemma \ref{lemm-2},
for $\mu<j \leq k$, we see
\[\begin{aligned}
Q^{j\bar j} = &\; \sum_{j < s \leq k} \alpha_s \sigma_s^{j\bar j} + \sum_{\mu < s \leq j } \alpha_s \sigma_s^{j\bar j}\\
\geq &\; \sum_{ j < s \leq k} \alpha_s \frac{\lambda_1\cdots\lambda_{s}}{\lambda_j} + \sum_{\mu < s \leq j } \alpha_s \lambda_1\cdots\lambda_{s-1}\\
\geq &\; \frac{1}{C \lambda_j} \sum_{ j < s \leq k} \alpha_s \sigma_s + \frac{1}{C \lambda_s} \sum_{\mu < s \leq j } \alpha_s \sigma_s \\
\geq &\; \frac{1}{C \delta' \lambda_1} \sum_{ \mu < s \leq k} \alpha_s \sigma_s
= \frac{Q}{C \delta' \lambda_1},
\end{aligned}\]
and
for $j>k$,
\[\begin{aligned}
Q^{j\bar j} = &\; \sum_{\mu < s \leq k } \alpha_s \sigma_s^{j\bar j}
 \geq \sum_{\mu < s \leq k } \alpha_s \lambda_1\cdots\lambda_{s-1} \\
 \geq &\; \sum_{\mu < s \leq k } \alpha_s \frac{\lambda_1\cdots\lambda_{s-1} \lambda_s }{\lambda_s}
 \geq \frac{1}{C \delta' \lambda_1}  \sum_{\mu < s \leq k } \alpha_s \sigma_s \geq \frac{ Q }{C \delta' \lambda_1}.
\end{aligned}\]
It follows that for $\delta'$ sufficiently small, we have
\[
Q^{j\bar j} \geq \frac{Q}{C \delta' \lambda_1} \geq \frac{C_\tau Q}{\lambda_1 \delta^2}
\geq C_\tau \frac{\lambda_1 Q (S_\mu^{j\bar j})^2}{S_\mu^2}.
\]
It follows from \eqref{ABCDE4} that
\[
P_m^2 (A_1 + B_1 + C_1 + D_1 - E_1) \geq 0.
\]
\end{proof}

With Lemma \ref{lemm-1} and Lemma \ref{lemm-2}, we prove that we may assume in \eqref{main inequality 7}
the \textbf{claim} holds, i.e.
for sufficiently large $m$,
\begin{equation}
\label{ABCDE0}
A_{i}+B_{i}+C_{i}+D_{i}-E_{i} \geq 0, \quad \forall i=1, \ldots, n.
\end{equation}
\begin{proof}
Set $\delta_1 = 1$. If $\lambda_2 \leq \delta_2 \lambda_1$ for $\delta_2 $ small enough, then by Lemma \ref{lemm-2} we see
that \eqref{ABCDE0} holds. Otherwise $\lambda_2 \geq \delta_2 \lambda_1$.
 If $\lambda_3 \leq \delta_3 \lambda_1$ for $\delta_3 > 0$ small enough, then by Lemma \ref{lemm-2} we see
that \eqref{ABCDE0} holds. Otherwise, we have $\lambda_3 \geq \delta_3 \lambda_1$. Proceeding iteratively,
we may arrive at $\lambda_k \geq \delta_k \lambda_1$. But in this case, an upper bound for $\lambda_1$ follows directly
from the equation as
\[
C \geq \sigma_k (\lambda) \geq \lambda_1 \cdots \lambda_k \geq (\delta_k)^{k-1} \lambda_1^k.
\]
Therefore, we may assume \eqref{ABCDE0} in \eqref{main inequality 7}.
This proves the claim.
\end{proof}


\begin{thebibliography}{9}
\bibitem{B} J.M. Ball, Differentiability properties of symmetric and isotropic functions, Duke Math. J. 51 (1984), 699-728.
\bibitem{C-N-S} L. Caffarelli, L. Nirenberg and J. Spruck, The Dirichlet problem for nonlinear second order elliptic equations, III: Functions of the eigenvalues of the Hessian, Acta Math. 155 (1985), 261-301.
\bibitem{C-H-Z1} J.C. Chu, L.D. Huang and X.H. Zhu, The Fu-Yau equation in higher dimensions, Peking Math. J. 2 (2019), 71-97.
\bibitem{C-H-Z2} J.C. Chu, L.D. Huang and X.H. Zhu, The Fu-Yau equation on compact astheno-K\"ahler manifolds, Adv. Math. 346 (2019), 908-945.
\bibitem{CJY}  T.C. Collins, A. Jacob, S.-T. Yau, $(1, 1)$ forms with specified Lagrangian phase: A priori estimates and algebraic obstructions, arXiv:1508.01934.
\bibitem{CP} T.C. Collins, Sebastien Picard, The Dirichlet Problem for the $k$-Hessian Equation on a complex manifold, arXiv:1909.00447.

\bibitem{CS} T.C. Collins and G. Sz\'ekelyhidi, Convergence of the $J$-flow on toric manifolds, J. Differential Geom. 107 (2017), 47-81.

\bibitem{CY} T.C. Collins and S.-T. Yau, Moment maps, nonlinear PDE, and stability in mirror symmetry, arXiv:1811.04824.
\bibitem{D-K} S. Dinew and S. Ko\l odziej, Liouville and Calabi-Yau type theorems for complex Hessian equations, Amer. J. Math. 139 (2017), 403-415.
\bibitem{DL} W.S. Dong and C. Li, Second order estimates for complex Hessian equations on Hermitian manifolds, arXiv:1908.03599.
\bibitem{Feng-Ge-Zheng} K. Feng, H.B. Ge and T. Zheng, The Dirichlet Problem of Fully Nonlinear Equations on Hermitian Manifolds, arXiv: 1905.02412.
\bibitem{FinoLi} A. Fino, Y.Y. Li, S. Salamon and L. Vezzoni, The Calabi-Yau equation on 4-manifolds over 2-tori. Trans. Amer. Math. Soc. 365 (2013), no. 3, 1551-1575.
\bibitem{F-Y2} J.X. Fu and S.T. Yau, The theory of superstring with flux on non-K\"ahler manifolds and the complex Monge-Amp\`ere equation, J. Differential Geom. 78 (2008), 369-428.
\bibitem{F-Y1} J.X. Fu and S.T. Yau, A Monge-Amp\`ere-type equation motivated by string theory, Comm. Anal. Geom. 15 (2007), 29-75.
\bibitem{G-J} B. Guan and H.M. Jiao, Second order estimates for Hessian type fully nonlinear elliptic equations on Riemannian manifolds, Calc. Var. Partial Differential Equations 54 (2015), 2693-2712.
\bibitem{GN} B. Guan and X. Nie, Fully nonlinear elliptic equations with gradient terms on Hermitian manifolds, preprint.
\bibitem{G-R-W} P.F. Guan, C.Y. Ren and Z.Z. Wang, Global $C^2$-estimates for convex solutions of curvature equations, Comm. Pure Appl. Math. 68 (2015), 1287-1325.
\bibitem{GZ} P.F. Guan and X.W. Zhang, A class of curvature type equations, arXiv:1909.03645.
\bibitem{HL} R. Harvey, H.B. Lawson, Calibrated geometries. Acta. Math., 148 (1982), 47-157.
\bibitem{H-M-W} Z.L. Hou, X.N. Ma and D.M. Wu, A second order estimate for complex Hessian equations on a compact K\"ahler manifold, Math. Res. Lett. 17 (2010), 547-561.
\bibitem{HS} G. Huisken and C. Sinestrari, Convexity estimates for mean curvature flow and singularities of mean convex surfaces, Acta. math. 183 (1999), 45-70.
\bibitem{LYZ} C. Leung, S.-T. Yau, and E. Zaslow, From special Lagrangian to Hermitian-Yang-Mills via Fourier-Mukai transform, Winter School on Mirror Symmetry, Vector Bundles and Lagrangian Submanifolds, (1999), 209-225, AMS. IP Stud. Adv. Math., 23, Amer, Math. Soc., Providence, RI, 2001.

\bibitem{LiRW} C.H. Li, C.Y. Ren and Z.Z. Wang, Curvature estimates for convex solutions of some fully
               nonlinear Hessian-type equations, Calc. Var. PDE 58 (2019), Paper No. 188, 32 pp.
\bibitem{L-R-W} M. Li, C.Y. Ren and Z.Z. Wang, An interior estimate for convex solutions and a rigidity theorem, J. Funct. Anal. 270 (2016), 2691-2714.
\bibitem{YYLi} Y. Y. Li, Some existence results of fully nonlinear elliptic equations of Monge-Ampere type, Comm. Pure Appl. Math. 43 (1990), 233-271.
\bibitem{P-P-Z} D.H. Phong, S. Picard and X.W. Zhang, A second order estimate for general complex Hessian equations, Anal. PDE 9 (2016), 1693-1709.
\bibitem{P-P-Z1} D.H. Phong, S. Picard and X.W. Zhang, The Fu-Yau equation with negative slope parameter, Invent. Math. 209 (2017), 541-576.
\bibitem{P-P-Z2} D.H. Phong, S. Picard and X.W. Zhang, On estimates for the Fu-Yau generalization of a Strominger system, J. Reine Angew. Math. 751 (2019), 243-274.
\bibitem{P-P-Z3} D.H. Phong, S. Picard and X.W. Zhang, Fu-Yau Hessian equations, preprint, arXiv: 1801.09842, to appear in J. Differential Geom.
\bibitem{G.S} G. Sz\'ekelyhidi, Fully non-linear elliptic equations on compact Hermitian manifolds, J. Differential Geom. 109 (2018), 337-378.
\bibitem{S-T-W} G. Sz\'ekelyhidi, V. Tosatti and B. Weinkove, Gauduchon metrics with prescribed volume form, Acta Math. 219 (2017), 181-211.
\bibitem{T-W3} V. Tosatti and B. Weinkove, Hermitian metrics, $(n-1,n-1)$ forms and Monge-Amp\`ere equations,
  J. Reine Angew. Math. 755 (2019), 67-101.
\bibitem{T-W19} V. Tosatti and B. Weinkove, The complex Monge-Amp\`ere equation with a gradient term, arXiv: 1906.10034.
\bibitem{Yuan} R.R. Yuan, On a class of fully nonlinear elliptic equations containing gradient terms on compact Hermitian manifolds, Canad. J. Math. 70 (2018), 943-960.
\bibitem{Yuan2} R.R. Yuan, Regularity of fully nonlinear elliptic equations on Hermitian manifolds, II, preprint.
\bibitem{D.K.Zhang} D.K. Zhang, Hessian equations on closed Hermitian manifolds, Pacific J. Math. 291 (2017), 485-510.
\bibitem{X.W.Zhang} X.W. Zhang, A priori estimates for complex Monge-Amp\`ere equation on Hermitian manifolds, Int. Math. Res. Not. IMRN 2010, 3814-3836.



\end{thebibliography}
\end{document}